\date{}
\renewcommand{\uppercasenonmath}[1]{}
\theoremstyle{plain}
\newtheorem{theorem}{Theorem}[section]
\newtheorem{proposition}[theorem]{Proposition}
\newtheorem{lemma}[theorem]{Lemma}
\newtheorem{corollary}[theorem]{Corollary}
\theoremstyle{definition}
\newtheorem{definition}[theorem]{Definition}
\theoremstyle{definition}
\theoremstyle{remark}
\newtheorem{remark}[theorem]{Remark}
\newcommand{\Lsr}{\mathcal{L}}
\def\bc{\begin{center}}
\def\ec{\end{center}}
\def\im{{\rm Im}}
\def\Ext{{\rm Ext}}
\def\Tor{{\rm Tor}}
\def\pd{{\rm pd}}
\def\fd{{\rm fd}}
\def\FPD{{\rm FPD}}
\def\Hom{{\rm Hom}}
\def\fd{{\rm fd}}
\def\Hom{{\rm Hom}}
\def\Ext{{\rm Ext}}
\def\Tor{{\rm Tor}}
\def\im{{\rm im}}
\def\Nil{{\rm Nil}}
\def\fd{{\rm fd}}
\def\Hom{{\rm Hom}}
\def\Ext{{\rm Ext}}
\def\Tor{{\rm Tor}}
\def\im{{\rm im}}
\def\Nil{{\rm Nil}}
\def\T{{\rm T}}
\def\Krull{{\rm Krull}}
\def\Nil{{\rm Nil}}
\def\Prufer{{\rm Pr\"{u}fer}}
\def\DQ{{\rm DQ}}
\def\fPD{{\rm fPD}}
\def\m{{\frak m}}
\def\T{{\rm T}}
\begin{document}
\begin{center}
{\large  \bf  Semi-regular flat modules over strong Pr\"{u}fer rings}

\vspace{0.5cm}   Xiaolei Zhang$^{a}$, Guocheng Dai$^b$, Xuelian Xiao$^c$, Wei Qi$^b$\\

{\footnotesize a.\ Department of Basic Courses, Chengdu Aeronautic Polytechnic, Chengdu 610100, China\\
b.\ School of Mathematical Sciences, Sichuan Normal University, Chengdu 610068, China\\
c.\ School of Mathematics, ABa Teachers University, Wenchuan 623002, China\\

E-mail: zxlrghj@163.com\\}
\end{center}

\bigskip
\centerline { \bf  Abstract}  We first introduce and study the notion of semi-regular flat modules, and then show that a ring $R$ is a strong \Prufer\ ring if and only if every submodule of a semi-regular flat $R$-module is semi-regular flat, if and only if every  ideal of $R$ is semi-regular flat, if and only if every $R$-module has a surjective semi-regular flat (pre)envelope.
\bigskip
\leftskip10truemm \rightskip10truemm \noindent
\vbox to 0.3cm{}\\
{\it Key Words:} strong \Prufer\ rings; semi-regular flat modules;  semi-regular coherent rings.\\
{\it 2010 Mathematics Subject Classification:} 13F05; 13C11.

\leftskip0truemm \rightskip0truemm
\bigskip
\section{introduction}

In this paper, we always assume $R$ is a commutative ring with identity and $\T(R)$ is the total ring of fractions of $R$. Following from \cite{fk16}, an ideal $I$ of $R$ is said to be dense if $(0:_RI):=\{r\in R|Ir=0\}$ is $0$, or be semi-regular if it contains a finitely generated dense sub-ideal, or be regular if it contains a regular element. Let $I$ be an ideal of $R$. Denote by $I^{-1}=\{z\in \T(R)|Iz\subseteq R\}$. If an ideal $I$ of $R$ satisfies $II^{-1}=R$, then $I$ is said to be an invertible ideal.

Early in 1932, \Prufer\ \cite{P32} introduced  integral domains over which all finitely generated non-zero ideals are invertible, and then they are called  \Prufer\ domains by \Krull\ \cite{K37}. Many algebraists have done a lot of work on \Prufer\ domains. For the system summaries of \Prufer\ domains, one can refer to \cite{FHP97}. Since \Prufer\ domains are of great importance to the study of integral domains, many scholars generalized the notion of integral domains to these of commutative rings with zero-divisors. In 1967, Butts and Smith \cite{BS67} introduced the notion of \Prufer\ rings over which every finitely generated regular ideal is invertible. And then, Griffin \cite{G69} characterized \Prufer\ rings utilizing multiplicative ideal theory. Recently, Xiao et al. \cite{xwl20} characterized \Prufer\ rings by module-theoretic viewpoint.

Since the notion of \Prufer\ rings is very simple, it is very hard to delve deeper (all total rings of quotients are \Prufer\ rings). For better understanding \Prufer\ rings, Anderson et al. \cite{AA85} introduced the notion of strong \Prufer\ rings, over which every finitely generated semi-regular ideal is locally principal, and they showed that a ring $R$ is strong \Prufer\ if and only if its Nagata ring $R(x)$ is a \Prufer\ ring. In 1987, Anderson et al. \cite{AP87} characterized strong \Prufer\ rings by lattice-theoretic viewpoint, i.e., a ring $R$ is strong \Prufer\ if and only if the lattice $\Lsr_{sr}(R)$ consisting of all finitely generated semi-regular ideals is a distributive lattice. In 1993, Lucas \cite{L93} proved that a ring $R$ is strong \Prufer\ if and only if $R$ is a \Prufer\ ring and $\T(R)$ is a strong \Prufer\ ring. The small finitistic dimensions \fPD($R$) ($\fPD(R) = \sup\{\pd_RM \mid M\ \mbox{is super finitely presented}, \ \pd_RM<\infty\}$) of a strong \Prufer\ ring $R$ is also very attractive. In 2020, Wang et al. \cite{fkxs20} showed that a ring $R$ satisfies $\fPD(R)=0$ if and only if $R$ is a $\DQ$ ring, i.e., the only finitely generated semi-regular ideal of $R$ is $R$ itself. Subsequently, Wang et al. \cite{wzq20} showed that a connect strong \Prufer\ ring has $\fPD(R)\leq 1$. Recently, The first author of the paper and Wang showed that every strong \Prufer\ ring has $\fPD(R)\leq 1$, and obtained examples of  total rings of quotients $R$ with $\fPD(R)=n$ for any $n\in\mathbb{N}$.

The main motivation of this paper is to give some module-theoretic and homology-theoretic characterizations of strong \Prufer\ rings. First, we introduce the notions of semi-regular flat modules, semi-regular coflat modules and semi-regular cotorsion modules, and show that the classes of semi-regular flat modules and semi-regular cotorsion modules constitute a perfect cotorsion pair. Then we give some characterizations of $\DQ$ rings and strong \Prufer\ rings. More precisely, we show that a ring $R$ is a  $\DQ$ ring if and only if every $R$-module is semi-regular flat, if and only if every $R$-module is semi-regular coflat, if and only if every semi-regular cotorsion module is injective; we also show that a ring $R$ is strong \Prufer\ if and only if every submodule of a semi-regular flat module is semi-regular flat modules, if and only if every finitely generated ideal of $R$ is semi-regular flat. Finally, we introduce and characterize the notion of semi-regular coherent rings. We also show that a ring $R$ is strong \Prufer\ if and only if every $R$-module has a surjective semi-regular flat envelope, if and only if every $R$-module has a surjective semi-regular flat preenvelope.

\section{Semi-regular flat modules}

Recall from \cite{xwl20}, an $R$-module $M$ is called a regular flat module if $\Tor_1^R(R/I,M)=0$ for any finitely generated regular ideal $I$. Obviously, every flat module is regular flat. The notion of regular flat modules is used to characterize the total rings of quotients (see \cite{xwl20}). For studying $\DQ$-rings and strong \Prufer\ rings, we introduced the notion of semi-regular flat modules.

\begin{definition}\label{sr-flat }
An $R$-module $M$ is said to be a semi-regular flat module if, for any finitely generated semi-regular ideal $I$, we have $\Tor_1^R(R/I,M)=0$. The class of all semi-regular flat modules is denoted by $\mathcal{F}_{sr}$.
\end{definition}

Obviously, any flat module is semi-regular flat, and any semi-regular flat module is regular flat. Dually, we can give the notion of semi-regular coflat modules.
\begin{definition}\label{sr-flat }
An $R$-module $M$ is said to be a semi-regular coflat module if, for any finitely generated semi-regular ideal $I$, we have $\Ext^1_R(R/I,M)=0$.
\end{definition}

\begin{lemma}\label{101}
 Let $M$ be an $R$-module. Then the following statements are equivalent:
\begin{enumerate}
 \item $M$ is a semi-regular flat module;
 \item for any finitely generated semi-regular ideal $I$, the natural homomorphism $I\otimes M\rightarrow R\otimes M$ is a monomorphism;
 \item for any finitely generated semi-regular ideal $I$, the natural homomorphism $\sigma_I: I\otimes M\rightarrow IM$ is an isomorphism;
 \item for any injective module $E$, $\Hom_R(M,E)$ is a semi-regular coflat module;
 \item if $E$ is injective cogenerator, then $\Hom_R(M,E)$ is a semi-regular coflat module.
\end{enumerate}
\end{lemma}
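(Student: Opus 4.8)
The plan is to prove the first three conditions equivalent directly from the long exact sequence for $\Tor$, and then to close the loop $(1)\Rightarrow(4)\Rightarrow(5)\Rightarrow(1)$ through the standard duality between $\Tor$ and $\Ext$ furnished by an injective module. For the first block I would fix a finitely generated semi-regular ideal $I$ and apply $-\otimes_R M$ to the short exact sequence $0\to I\to R\to R/I\to 0$. Since $R$ is free, $\Tor_1^R(R,M)=0$, so the long exact sequence collapses to
$$0\to \Tor_1^R(R/I,M)\to I\otimes_R M\xrightarrow{\ \phi\ } R\otimes_R M,$$
where $\phi$ is the natural homomorphism of condition (2). This identifies $\Tor_1^R(R/I,M)$ with $\ker\phi$ and yields $(1)\Leftrightarrow(2)$ at once.

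For $(2)\Leftrightarrow(3)$ I would observe that under the canonical identification $R\otimes_R M\cong M$ the map $\phi$ sends $a\otimes m\mapsto am$, so it factors as $I\otimes_R M\xrightarrow{\sigma_I}IM\hookrightarrow M$, where $\sigma_I$ is the corestriction onto $IM$. Since $\sigma_I$ is surjective by the very definition of $IM$ and shares its kernel with $\phi$, the map $\sigma_I$ is an isomorphism precisely when $\phi$ is a monomorphism. This gives $(2)\Leftrightarrow(3)$.

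The second block rests on the natural isomorphism
$$\Ext^1_R\big(R/I,\Hom_R(M,E)\big)\cong \Hom_R\big(\Tor_1^R(R/I,M),E\big)$$
valid for every injective $R$-module $E$. I would derive it by choosing a projective resolution $P_\bullet\to R/I$, using the adjunction $\Hom_R(P_\bullet\otimes_R M,E)\cong \Hom_R(P_\bullet,\Hom_R(M,E))$ together with the exactness of $\Hom_R(-,E)$, which holds because $E$ is injective; the left-hand cohomology then computes $\Hom_R(\Tor_1^R(R/I,M),E)$ while the right-hand cohomology computes $\Ext^1_R(R/I,\Hom_R(M,E))$. Granting this, $(1)\Rightarrow(4)$ is immediate, for if $\Tor_1^R(R/I,M)=0$ the right-hand $\Hom$ vanishes, making $\Hom_R(M,E)$ semi-regular coflat for every injective $E$; and $(4)\Rightarrow(5)$ is trivial since an injective cogenerator is injective.

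The only delicate step is $(5)\Rightarrow(1)$, where the cogenerator hypothesis does the real work. Taking $E$ to be an injective cogenerator, condition (5) gives $\Ext^1_R(R/I,\Hom_R(M,E))=0$, hence $\Hom_R(\Tor_1^R(R/I,M),E)=0$ by the duality isomorphism; because $E$ is a cogenerator, any module whose $\Hom$ into $E$ vanishes is itself zero, forcing $\Tor_1^R(R/I,M)=0$ for every finitely generated semi-regular $I$. I expect the main (and only mild) obstacle to be invoking the $\Tor$--$\Ext$ duality with the correct naturality, and recording that it is precisely the cogenerator property that upgrades the weaker hypothesis (5) back to the vanishing of $\Tor$.
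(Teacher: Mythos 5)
Your proposal is correct and follows essentially the same route as the paper: the long exact sequence of $\Tor$ applied to $0\to I\to R\to R/I\to 0$ for the equivalence of (1)--(3), and the duality $\Ext^1_R(R/I,\Hom_R(M,E))\cong\Hom_R(\Tor_1^R(R/I,M),E)$ together with the cogenerator property for (1)$\Rightarrow$(4)$\Rightarrow$(5)$\Rightarrow$(1). The only cosmetic difference is that you handle (2)$\Leftrightarrow$(3) in one step by factoring the natural map through $\sigma_I$, where the paper argues (2)$\Rightarrow$(3) and (3)$\Rightarrow$(1) separately.
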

\begin{proof} $(1)\Leftrightarrow (2)$: Let $I$ be a finitely generated semi-regular ideal. Then we have a long exact sequence:  $$0\rightarrow \Tor_1^R(R/I,M)\rightarrow I\otimes M\rightarrow R\otimes M\rightarrow R/I\otimes M\rightarrow 0.$$ Consequently, $\Tor_1^R(R/I,M)=0$ if and only if
$I\otimes M\rightarrow R\rightarrow R\otimes M$ is a monomorphism.

$(2)\Rightarrow (3)$: Let $I$ be a finitely generated semi-regular ideal. Then we have the following commutative diagram:
$$\xymatrix{
 0 \ar[r]^{} &I\otimes M \ar[d]_{\sigma_I}\ar[r]^{} & R\otimes_RM \ar[d]^{\cong}_{\sigma_R}\\
 0 \ar[r]^{} & IM \ar[r]^{} &M.
}$$
Then $\sigma_I$ is a monomorphism. Since the multiplicative map $\sigma_I$ is an epimorphism, $\sigma_I$ is actually an isomorphism.

$(3)\Rightarrow (1)$: Let $I$ be a finitely generated semi-regular ideal. Then we have a long exact sequence:
$$\xymatrix{
 0\ar[r]^{} & \Tor_1^R(R/I,M) \ar[r]^{} &I M \ar[r]^{f} & M.
}$$
Since $f$ is a  natural embedding map, we have $\Tor_1^R(R/I,M)=0$.

$(1)\Rightarrow (4)$: Let $I$ be a finitely generated semi-regular ideal and $E$  an injective module. Then $\Ext_R^1(R/I,\Hom_R(M,E))\cong\Hom_R(\Tor_1^R(R/I,M),E)$. Since $M$ is a semi-regular flat module, then $\Tor_1^R(R/I,M)=0$. Thus $\Ext_R^1(R/I,\Hom_R(M,E))=0$, So $\Hom_R(M,E)$ is a semi-regular coflat module.

$(4)\Rightarrow (5)$: Trivial.

$(5)\Rightarrow (1)$: Let $I$ be a finitely generated semi-regular ideal and $E$  an injective cogenerator. Since $\Hom_R(M,E)$ is a regular coflat module and $$\Ext_R^1(R/I,\Hom_R(M,E))\cong\Hom_R(\Tor_1^R(R/I,M),E),$$ we have $\Hom_R(\Tor_1^R(R/I,M),E)=0$. Since $E$ is an injective cogenerator, $\Tor_1^R(R/I,M)=0$. So $M$ is a semi-regular flat module.
\end{proof}

\begin{lemma}\label{102}
Let $R$ be a ring. Then the class $\mathcal{F}_{sr}$ of all semi-regular flat modules is closed under direct limits, pure submodules and pure quotients.
\end{lemma}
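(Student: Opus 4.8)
The plan is to treat the three closure properties separately, since each rests on a different homological fact but all reduce to the defining condition $\Tor_1^R(R/I,M)=0$ for finitely generated semi-regular ideals $I$. The direct-limit case is the quickest: if $M=\varinjlim M_\lambda$ with each $M_\lambda$ semi-regular flat, then since $\Tor$ commutes with direct limits in its second variable we have $\Tor_1^R(R/I,M)\cong\varinjlim\Tor_1^R(R/I,M_\lambda)=0$ for every finitely generated semi-regular ideal $I$, so $M\in\mathcal{F}_{sr}$.

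For pure quotients, let $0\to N\to M\to L\to 0$ be a pure exact sequence with $M$ semi-regular flat. Applying $\Tor_*^R(R/I,-)$ for a finitely generated semi-regular ideal $I$ yields the exact piece $\Tor_1^R(R/I,M)\to\Tor_1^R(R/I,L)\xrightarrow{\partial}R/I\otimes N\to R/I\otimes M$. Purity guarantees that tensoring the sequence with $R/I$ keeps it exact, so $R/I\otimes N\to R/I\otimes M$ is a monomorphism; hence the connecting map $\partial$ vanishes and $\Tor_1^R(R/I,L)$ becomes a quotient of $\Tor_1^R(R/I,M)=0$. Thus $L$ is semi-regular flat.

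The pure-submodule case is the main obstacle, because the long exact sequence alone only bounds $\Tor_1^R(R/I,N)$ by $\Tor_2^R(R/I,L)$, which need not vanish. Here I would instead use the characterization in Lemma \ref{101}(2). Let $N\subseteq M$ be a pure submodule with $M$ semi-regular flat, and fix a finitely generated semi-regular ideal $I$. Consider the commutative square whose rows are the natural maps $I\otimes N\to R\otimes N$ and $I\otimes M\to R\otimes M$ and whose vertical maps are induced by the inclusion $N\hookrightarrow M$. The bottom row is a monomorphism since $M\in\mathcal{F}_{sr}$, and the left vertical map $I\otimes N\to I\otimes M$ is a monomorphism by purity (tensoring the pure inclusion with $I$). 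Commutativity then forces the top row $I\otimes N\to R\otimes N$ to be a monomorphism, so $N$ is semi-regular flat by Lemma \ref{101}(2).

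Alternatively, one can dispatch both pure cases together via character modules: the isomorphism $\Ext_R^1(R/I,\Hom_{\mathbb{Z}}(M,\mathbb{Q}/\mathbb{Z}))\cong\Hom_{\mathbb{Z}}(\Tor_1^R(R/I,M),\mathbb{Q}/\mathbb{Z})$ shows that $M$ is semi-regular flat precisely when its character module is semi-regular coflat, and a pure sequence dualizes to a split sequence of character modules, on which semi-regular coflatness passes to direct summands. I expect the square argument above to be the cleanest route in practice, with the character-module reformulation kept in reserve.
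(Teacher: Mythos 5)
Your proof is correct and takes essentially the same route as the paper: the direct-limit case is identical, and your two separate arguments for pure submodules (the commutative square forcing $I\otimes N\to R\otimes N$ to be monic) and pure quotients (the vanishing connecting map) are precisely the two halves of the single Snake Lemma applied in the paper to the $3\times 3$ diagram obtained by tensoring the pure sequence with $I$, $R$ and $R/I$. There are no gaps.
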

\begin{proof}
For the  direct limits, suppose $\{M_i\}_{i\in\Gamma}$ is a direct system consisting of semi-regular flat modules. Then, for any finitely generated semi-regular ideal $I$, we have $\Tor_1^R(R/I,\lim\limits_{\longrightarrow } M_i)=\lim\limits_{\longrightarrow}\Tor_1^R(R/I,M_i)=0$. So $\lim\limits_{\longrightarrow }M_i$ is a semi-regular flat module.

For pure submodules and pure quotients, let $I$ be a finitely generated semi-regular ideal. Suppose $0\rightarrow M\rightarrow N\rightarrow L\rightarrow 0$ is a pure exact sequence. We have the following commutative diagram with rows exact :
$$\xymatrix{
 0 \ar[r]^{} & M\otimes_R I \ar[d]_{f}\ar[r]^{} & N\otimes_R I \ar@{>->}[d]_{}\ar[r]^{} & L\otimes_R I \ar[d]_{g}\ar[r]^{} & 0\\
 0 \ar[r]^{} & M\otimes_R R \ar@{->>}[d]_{}\ar[r]^{} & N\otimes_R R \ar@{->>}[d]_{}\ar[r]^{} & L\otimes_R R \ar@{->>}[d]_{}\ar[r]^{} & 0\\
 0 \ar[r]^{} & M\otimes_R R/I \ar[r]^{} & N\otimes_R R/I \ar[r]^{} & L\otimes_R R/I \ar[r]^{} & 0\\}$$
By the Snake Lemma, the natural homomorphism $f: M\otimes_R I \rightarrow M\otimes_R R$ and $g: L\otimes_R I \rightarrow L\otimes_R R$ are all monomorphisms. Consequently, $M$ and $L$ are all semi-regular flat.
\end{proof}

\begin{definition}\label{sr-cotorsion }
An $R$-module $N$ is said to be semi-regular cotorsion if, for any semi-regular flat module $M$, we have $\Ext^1_R(M,N)=0$. The class of all semi-regular cotorsion modules is denoted by $\mathcal{C}_{sr}$.
\end{definition}
Obviously, any injective module is semi-regular cotorsion, and any semi-regular cotorsion module is cotorsion. It is well-known that the classes of all flat modules and all cotorsion modules constitute  a perfect  cotorsion pair (see \cite{BBE01}). Now, we show that the class of all semi-regular flat modules and all semi-regular cotorsion modules also  constitute  a perfect  cotorsion pair.
\begin{theorem}\label{204}
Let $R$ be a ring. Then  $(\mathcal{F}_{sr},\mathcal{C}_{sr})$ is a perfect cotorsion pair. Consequently, the class $\mathcal{F}_{sr}$ of all  semi-regular flat modules is covering and the class $\mathcal{C}_{sr}$ of all  semi-regular cotorsion modules is enveloping.
\end{theorem}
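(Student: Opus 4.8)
The plan is to apply the standard machinery of cotorsion pairs generated by a set, following the template used for the flat/cotorsion pair. The key observation is that the class $\mathcal{F}_{sr}$ is defined by $\Tor$-vanishing against a \emph{set} of test modules, namely $S = \{R/I \mid I \text{ is a finitely generated semi-regular ideal}\}$ (up to isomorphism this is a set, since each such $I$ is generated by finitely many elements of $R$). I would first recall that a cotorsion pair $(\mathcal{A},\mathcal{B})$ is one where $\mathcal{A} = {}^{\perp}\mathcal{B}$ and $\mathcal{B} = \mathcal{A}^{\perp}$ with respect to $\Ext^1_R$, and that it is \emph{perfect} when every module has both a special $\mathcal{A}$-precover (equivalently, by Wakamatsu-type results, an $\mathcal{A}$-cover) and a special $\mathcal{B}$-preenvelope. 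By the theory of cotorsion pairs generated by a set (Eklof--Trlifaj), any cotorsion pair \emph{cogenerated} by a set of modules is complete, and I would lean on this as my black box.

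The main structural step is to verify that $(\mathcal{F}_{sr},\mathcal{C}_{sr})$ is genuinely a cotorsion pair \emph{cogenerated by the set} $S$ above, so that completeness is automatic. Concretely, set $\mathcal{B} = S^{\perp} = \{N \mid \Ext^1_R(R/I,N)=0 \text{ for all f.g. semi-regular } I\}$, which is exactly the class of semi-regular \emph{coflat} modules. One shows $\mathcal{F}_{sr} = {}^{\perp}(S^{\perp})$: the inclusion $\subseteq$ follows from the dimension-shifting identity $\Ext^1_R(R/I,N)\cong \Hom_R(\Tor_1^R(R/I,M),E)$-type duality used in Lemma~\ref{101}, while the reverse inclusion requires showing that any module killed by $\Ext^1$ against all of $\mathcal{B}$ is semi-regular flat. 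For this I would invoke the characterization that $M\in{}^{\perp}(S^{\perp})$ iff $M$ is a direct summand of a module filtered by copies of $R$ and the test-quotients' syzygies; combined with Lemma~\ref{102} (closure of $\mathcal{F}_{sr}$ under direct limits and pure quotients) this pins down $\mathcal{F}_{sr}$ exactly as the left-hand class. Once the pair is identified as cogenerated by a set, completeness on both sides is immediate.

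**The hard part will be** upgrading \emph{completeness} to \emph{perfectness}, i.e.\ promoting special precovers to genuine covers. The decisive input is Lemma~\ref{102}: since $\mathcal{F}_{sr}$ is closed under direct limits, one can apply the theorem of Enochs (a class that is closed under direct limits and provides precovers is automatically covering) to conclude that $\mathcal{F}_{sr}$ is a covering class. By a theorem on cotorsion pairs (if the left-hand class of a complete cotorsion pair is closed under direct limits, then the pair is perfect and the right-hand class is enveloping), the closure under direct limits simultaneously yields that $\mathcal{C}_{sr}$ is enveloping. I would therefore organize the proof as: (i) exhibit the cogenerating set $S$ and identify $\mathcal{B}=S^{\perp}$ with the coflat modules; (ii) prove $\mathcal{F}_{sr} = {}^{\perp}\mathcal{B}$; (iii) invoke Eklof--Trlifaj for completeness; (iv) invoke Lemma~\ref{102} plus Enochs's direct-limit criterion for perfectness.

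I expect the subtlest point to be step (ii), the equality $\mathcal{F}_{sr}={}^{\perp}\mathcal{B}$, since one direction needs the full strength of the set-cogenerated description rather than just the defining $\Tor$-vanishing; the remaining steps are applications of well-established cotorsion-pair theory that I would cite rather than reprove.
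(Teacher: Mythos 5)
There is a genuine gap, and it sits exactly where you predicted: step (ii). The equality $\mathcal{F}_{sr}={}^{\perp}(S^{\perp})$ for your test set $S=\{R/I \mid I \mbox{ f.g. semi-regular}\}$ is false in general. Indeed, every $R/I\in S$ automatically lies in ${}^{\perp}(S^{\perp})$ (any $N\in S^{\perp}$ satisfies $\Ext_R^1(R/I,N)=0$ by definition), but $R/I$ is almost never semi-regular flat: $\Tor_1^R(R/I,R/I)\cong I/I^2$, which is nonzero unless $I$ is idempotent --- already $R=\mathbb{Z}$, $I=2\mathbb{Z}$ kills the claim. The underlying error is conflating a $\Tor$-orthogonal class with the $\Ext$-double-perpendicular of the same set of test modules; these coincide for neither the flat/cotorsion pair nor here. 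Relatedly, you identify the right-hand class of the pair with $S^{\perp}$, i.e.\ with the semi-regular \emph{coflat} modules, but the theorem concerns $\mathcal{C}_{sr}=\mathcal{F}_{sr}^{\perp}$, and there is no reason these two classes agree (the paper is careful to keep ``coflat'' and ``cotorsion'' as distinct notions). So the pair you would actually produce by Eklof--Trlifaj from $S$ is a different cotorsion pair, not $(\mathcal{F}_{sr},\mathcal{C}_{sr})$.

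Your overall architecture (cogenerate by a set to get completeness, then use closure under direct limits to upgrade to perfectness) is sound and is essentially how such results are proved; the fix is to choose the right cogenerating set. One repair is to cogenerate by a set of representatives of semi-regular flat modules of bounded cardinality, using the closure of $\mathcal{F}_{sr}$ under pure submodules and pure quotients (Lemma~\ref{102}) to filter an arbitrary semi-regular flat module by small ones from the class itself, exactly as in the Bican--El~Bashir--Enochs proof that all modules have flat covers. The paper short-circuits all of this by citing Holm--J{\o}rgensen: since $\mathcal{F}_{sr}$ contains $R$ and is closed under extensions, direct sums, pure submodules and pure quotients, their Theorem~3.4 yields directly that $(\mathcal{F}_{sr},\mathcal{F}_{sr}^{\perp})$ is a perfect cotorsion pair; your step (iv) (Enochs's direct-limit criterion for covers) is correct but already packaged inside that citation.
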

\begin{proof} Obvioulsy, $R$ itself is a semi-regular flat module, and the class $\mathcal{F}_{sr}$ is closed under direct summands and extensions. By Lemma \ref{102} and  \cite[Theorem 3.4]{HJ08}, we have
$(\mathcal{F}_{sr},\mathcal{C}_{sr})$ is a perfect cotorsion pair. So  the class $\mathcal{F}_{sr}$ of all  semi-regular flat modules is covering and the class $\mathcal{C}_{sr}$ of all  semi-regular cotorsion modules is enveloping.
\end{proof}
\begin{proposition}\label{209}
Let $R$ be a ring. Then the following statements are equivalent:
\begin{enumerate}
 \item $M$ is a semi-regular cotorsion module;
 \item  $\Hom_R(F,M)$ is semi-regular cotorsion  for any flat module $F$;
 \item $\Hom_R(P,M)$ is semi-regular cotorsion for any projective module $P$.
\end{enumerate}
\end{proposition}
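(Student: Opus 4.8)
The plan is to prove the cycle $(1)\Rightarrow(2)\Rightarrow(3)\Rightarrow(1)$, with essentially all the work concentrated in $(1)\Rightarrow(2)$. The implication $(2)\Rightarrow(3)$ is immediate, since every projective module is flat. And $(3)\Rightarrow(1)$ follows by taking $P=R$, because $\Hom_R(R,M)\cong M$; thus the hypothesis of $(3)$ specializes to the assertion that $M$ itself is semi-regular cotorsion.

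For $(1)\Rightarrow(2)$, fix a flat module $F$ and an arbitrary semi-regular flat module $N$; the goal is to show $\Ext^1_R(N,\Hom_R(F,M))=0$. First I would choose a short exact sequence $0\to K\to P\to N\to 0$ with $P$ free. Applying $\Hom_R(-,\Hom_R(F,M))$ and using that $\Ext^1_R(P,\Hom_R(F,M))=0$, one sees that $\Ext^1_R(N,\Hom_R(F,M))$ is the cokernel of the map $\alpha':\Hom_R(P,\Hom_R(F,M))\to\Hom_R(K,\Hom_R(F,M))$ induced by $K\hookrightarrow P$. The natural adjunction isomorphism $\Hom_R(X,\Hom_R(F,M))\cong\Hom_R(X\otimes_R F,M)$ then identifies $\alpha'$ with the map $\alpha:\Hom_R(P\otimes_R F,M)\to\Hom_R(K\otimes_R F,M)$, so that $\Ext^1_R(N,\Hom_R(F,M))\cong\coker(\alpha)$.

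Next, since $F$ is flat, tensoring the short exact sequence by $F$ preserves exactness, yielding $0\to K\otimes_R F\to P\otimes_R F\to N\otimes_R F\to 0$. Applying $\Hom_R(-,M)$ to this sequence realizes $\coker(\alpha)$ as a submodule of $\Ext^1_R(N\otimes_R F,M)$. The crucial structural point is that $N\otimes_R F$ is again semi-regular flat: by Lazard's theorem $F$ is a direct limit of finitely generated free modules $P_\lambda$, so $N\otimes_R F\cong\varinjlim(N\otimes_R P_\lambda)$ is a direct limit of finite direct sums of copies of $N$, each of which lies in $\mathcal{F}_{sr}$; since $\mathcal{F}_{sr}$ is closed under direct limits by Lemma \ref{102}, we conclude $N\otimes_R F\in\mathcal{F}_{sr}$. (Equivalently, the flat base-change isomorphism $\Tor_1^R(R/I,N\otimes_R F)\cong\Tor_1^R(R/I,N)\otimes_R F$ gives this at once.) Because $M$ is semi-regular cotorsion, $\Ext^1_R(N\otimes_R F,M)=0$, and therefore $\Ext^1_R(N,\Hom_R(F,M))\cong\coker(\alpha)=0$, which proves $(2)$.

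I expect the main obstacle to be the homological bookkeeping in the middle step: one must verify that the connecting maps in the two long exact sequences — the one obtained before tensoring (for $\Hom_R(F,M)$) and the one obtained after tensoring by $F$ (for $M$) — are induced by the single map $\alpha$, so that their cokernels genuinely coincide. This is exactly where the naturality of the tensor-hom adjunction and the exactness of $-\otimes_R F$ are both essential. Once this identification is in place, the remaining input, namely that $\mathcal{F}_{sr}$ is stable under tensoring with a flat module, is routine given Lemma \ref{102}.
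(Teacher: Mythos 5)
Your proposal is correct and follows essentially the same route as the paper's proof: both take a projective presentation $0\to K\to P\to N\to 0$, observe that $N\otimes_R F$ remains semi-regular flat via $\Tor_1^R(R/I,N\otimes_R F)\cong\Tor_1^R(R/I,N)\otimes_R F=0$, and then use tensor-hom adjunction to transfer the vanishing of $\Ext^1_R(N\otimes_R F,M)$ to $\Ext^1_R(N,\Hom_R(F,M))$. Your write-up is in fact slightly more careful than the paper's about the naturality of the adjunction identifying the two connecting maps, but the argument is the same.
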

\begin{proof}
$(1)\Rightarrow (2)$: Let $N$ be a semi-regular flat module and $F$  a flat module. There is a short exact sequence
 $0\rightarrow K\rightarrow P\rightarrow N\rightarrow 0$, where $P$ is  projective. So there is an exact sequence $0\rightarrow K\otimes_R F\rightarrow P\otimes_R F\rightarrow N\otimes_R F\rightarrow 0$.
Note that $\Tor^R_1(R/I,N\otimes_R F)=\Tor^R_1(R/I,N)\otimes_R F=0$ for any finitely generated semi-regular ideal $I$, thus $N\otimes_R F$ is a semi-regular flat $R$-module.
Since $$\Hom_R(P\otimes_R F,M)\rightarrow \Hom_R(K\otimes_R F,M)\rightarrow \Ext_R^1(N\otimes_R F,M)=0$$ is exact, there is a short exact sequence $$\Hom_R(P,\Hom_R(F,M))\rightarrow \Hom_R(K,\Hom_R(F,M))\rightarrow 0.$$
On the other hand, the sequence $$(P,(F,M))\rightarrow (K,(F,M))\rightarrow \Ext_R^1(N,(F,M))\rightarrow \Ext_R^1(P,(F,M))=0$$ is exact (Use $(-,-)$ to instead of $\Hom_R(-,-)$). Consequently, $$\Ext_R^1(N,\Hom_R(F,M))=0.$$ Thus $\Hom_R(F,M)$ is a semi-regular cotorsion $R$-module.

$(2)\Rightarrow (3)$: Trivial.

$(3)\Rightarrow (1)$: Set $P=R$, then the result holds.
\end{proof}

\section{The homology theory of semi-regular flat modules}
Recall from \cite[Porposition 2.2]{fkxs20}, a ring $R$ is called a  $\DQ$ ring  if the only  finitely generated semi-regular ideal of $R$ is $R$ itself. It is well-known that a  ring $R$ is a von Neumann regular ring if and only if  any $R$-module is flat.
\begin{theorem}\label{201}
Let $R$ be a ring. Then the following statements are equivalent:
\begin{enumerate}
 \item $R$ is a $\DQ$ ring;
 \item \fPD$(R)=0$;
 \item every $R$-module is semi-regular flat;
 \item every $R$-module is semi-regular coflat;
 \item every semi-regular cotorsion module is injective;
 \item for every finitely generated semi-regular ideal $I$ and any finitely generated ideal $J$, we have $I\cap J=IJ$;
 \item for every finitely generated semi-regular ideal $I$, $R/I$ is flat module;
 \item for every finitely generated semi-regular ideal $I$ and $a\in I$, there is $c\in I$ such that $(1-c)a=0$.
\end{enumerate}
\end{theorem}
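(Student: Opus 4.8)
The plan is to funnel all eight conditions through the single statement that $R$ is a $\DQ$ ring, namely that the only finitely generated semi-regular ideal of $R$ is $R$ itself. Conditions $(3)$, $(4)$, $(5)$ are ``all-modules'' homological assertions, whereas $(6)$, $(7)$, $(8)$ are per-ideal conditions on the cyclic quotients $R/I$; the device that collapses them is that, for a finitely generated semi-regular ideal $I$, the module $R/I$ is finitely presented, so flatness of $R/I$ upgrades to projectivity, and semi-regularity then forces $I=R$. Concretely I would prove the cycle $(1)\Rightarrow(3)\Rightarrow(7)\Rightarrow(1)$, together with $(1)\Rightarrow(4)\Rightarrow(7)$, the equivalences $(7)\Leftrightarrow(6)$ and $(3)\Leftrightarrow(5)$, the chain $(6)\Rightarrow(8)\Rightarrow(1)$, and finally quote \cite{fkxs20} for $(1)\Leftrightarrow(2)$.

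Most links are routine. For $(1)\Rightarrow(3)$ and $(1)\Rightarrow(4)$, if $R$ is $\DQ$ then every finitely generated semi-regular $I$ equals $R$, so $R/I=0$ and $\Tor_1^R(R/I,M)=\Ext_R^1(R/I,M)=0$ for all $M$. The equivalence $(3)\Leftrightarrow(7)$ is a rephrasing, since $(3)$ says $\Tor_1^R(R/I,M)=0$ for every finitely generated semi-regular $I$ and every $M$, which is exactly flatness of each $R/I$. Similarly $(4)$ amounts to $\Ext_R^1(R/I,-)=0$, i.e.\ $R/I$ projective, for every such $I$; in particular $(4)\Rightarrow(7)$. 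For $(7)\Leftrightarrow(6)$ I would use $\Tor_1^R(R/I,R/J)\cong(I\cap J)/IJ$ for finitely generated $J$ and the fact that flatness of $R/I$ is tested against the modules $R/J$. For $(6)\Rightarrow(8)$, taking $J=(a)$ with $a\in I$ gives $(a)=I\cap(a)=Ia$, hence $a=ca$ for some $c\in I$. Finally $(3)\Leftrightarrow(5)$ follows from Theorem \ref{204}: since $(\mathcal{F}_{sr},\mathcal{C}_{sr})$ is a cotorsion pair, $\mathcal{F}_{sr}={}^{\perp}\mathcal{C}_{sr}$, so $(3)$ (i.e.\ $\mathcal{F}_{sr}$ is all modules) forces every semi-regular cotorsion module to be injective, and conversely if $\mathcal{C}_{sr}$ equals the injectives then $\mathcal{F}_{sr}={}^{\perp}(\text{injectives})$ is all of the module category.

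The crux, and the step I expect to be the main obstacle, is passing from a flatness/purity condition on $R/I$ to the conclusion $I=R$, i.e.\ $(7)\Rightarrow(1)$ and $(8)\Rightarrow(1)$. Here I would first record that a finitely generated semi-regular ideal $I$ is dense: it contains a finitely generated dense sub-ideal $I_0$, so $(0:_R I)\subseteq(0:_R I_0)=0$. For $(7)\Rightarrow(1)$, finite generation makes $R/I$ finitely presented, so flat implies projective; thus $0\to I\to R\to R/I\to0$ splits, $I=eR$ for an idempotent $e$, and $0=(0:_R I)=(1-e)R$ gives $e=1$ and $I=R$. For $(8)\Rightarrow(1)$ one avoids projectivity entirely: writing $I=(a_1,\dots,a_n)$ and picking $c_i\in I$ with $(1-c_i)a_i=0$, the product $(1-c_1)\cdots(1-c_n)$ has the form $1-c$ with $c\in I$ and annihilates every $a_i$, hence annihilates $I$; density then forces $1-c=0$, so $1=c\in I$. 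In both routes the engine is the interplay between finite generation (finite presentation, so flat $\Rightarrow$ projective) and semi-regularity (density, so trivial annihilator); once this is isolated the remaining reductions are standard homological algebra.
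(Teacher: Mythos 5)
Your proof is correct and follows essentially the same strategy as the paper: everything is funneled through the density of finitely generated semi-regular ideals (so $(0:_RI)=0$), the product trick $\prod_i(1-c_i)$ handles $(8)\Rightarrow(1)$, $(3)\Leftrightarrow(5)$ comes from the cotorsion pair of Theorem \ref{204}, and $(1)\Leftrightarrow(2)$ is quoted from \cite{fkxs20}. The only differences are cosmetic: you prove $(6)\Leftrightarrow(7)$ and $(6)\Rightarrow(8)$ directly where the paper cites \cite[Theorem 1.2.15]{g}, and your $(7)\Rightarrow(1)$ via ``finitely presented flat is projective, hence $I$ is generated by an idempotent'' is the same splitting argument the paper uses for $(4)\Rightarrow(1)$.
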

\begin{proof}$(1)\Rightarrow (3)$, $(1)\Rightarrow (4)$ and $(1)\Rightarrow (7)$: Trivial.

$(1)\Leftrightarrow (2)$: See \cite[Proposition 2.2]{fkxs20}. $(3)\Leftrightarrow (5)$: It follows form Theorem \ref{204}. $(6)\Leftrightarrow (7)\Leftrightarrow (8)$: See \cite[Theorem 1.2.15]{g}.

$(3)\Rightarrow (6)$: Let $I$ be a finitely generated semi-regular ideal of $R$ and $J$ a finitely generated ideal  of $R$. Then $R/J$ is a semi-regular flat module. So $\Tor_1^R(R/I,R/J)=0$, that is, $I\cap J=IJ$ (See \cite[Exercise 3.20]{fk16}).

$(7)\Rightarrow (1)$:  Let $I$ be a finitely generated semi-regular ideal of $R$  of $R$ and $M$  an $R$-module. Then we have $\Tor_1^R(R/I, M)=0$.

$(8)\Rightarrow (1)$: Suppose $I=\langle a_1,...,a_n\rangle$ is finitely generated semi-regular ideal. Then, for any $i=1,...,n$, there exists $c_i\in I$ such that $(1-c_i)a_i=0$. Set $c=\prod_{i=1}^n(1-c_i)$. Then $ca_i=0$($i=1,...,n$). Thus $c\in (0:_RI)=0$. Note that $1-c\in I$. Thus $1\in I$ and $I=R$.

$(4)\Rightarrow (1)$:  Let $I$ be a finitely generated semi-regular ideal of $R$.  Then $R/I$ is a projective module by (4). Thus $I$ is finitely generated idempotent ideal. By \cite[Chapter I, Theorem 1.10]{FS01}, $I=\langle e\rangle$ where $e$ is an  idempotent. Then $1-e\in (0:_RI)=0$. So $I=R$.
\end{proof}

\begin{remark}\label{sr-flat}
Now we give examples of non-flat semi-regular flat modules, and non-semi-regular  regular flat modules.
\begin{enumerate}
 \item Suppose the non-field ring $R$ is a Noetherian local ring with Krull dimension equal to $0$ (Take $R=\mathbb{Z}_{p^n}$ for example). By \cite{AB58}, we have \fPD$(R)=0 $. Since $R$ is a local ring  which is not a field, then $R$  is not a von Neumann regular ring. By Theorem \ref{201}, there exists   a semi-regular flat $R$-module which is not flat.

 \item Following from \cite[Theorem 2.13]{xwl20}, $R$ is a total ring of quotient if and only if any $R$-module  is regular flat. Wang \cite{wzcc20} give an example of total rings of quotients $R$ with \fPD$(R)>0$ . Then there exist regular flat modules which are not semi-regular flat by Theorem \ref{201}.
 \end{enumerate}
\end{remark}

\begin{corollary}\label{202}
Let $R$ be a ring. Then the following statements are equivalent:
\begin{enumerate}
 \item $R$ is a semi-simple ring;
 \item any semi-regular flat $R$-module is projective;
 \item any $R$-module is semi-regular cotorsion.
\end{enumerate}
\end{corollary}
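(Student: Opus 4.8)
The plan is to close the cycle by proving the two easy implications out of $(1)$, the formal equivalence $(2)\Leftrightarrow(3)$, and the substantive return arrow $(2)\Rightarrow(1)$. For the easy directions, recall the classical fact that over a semisimple ring every module is projective (equivalently, $\Ext^1_R(-,-)$ vanishes identically). Hence for $(1)\Rightarrow(2)$ every module, in particular every semi-regular flat one, is projective; and for $(1)\Rightarrow(3)$ the identical vanishing of $\Ext^1_R$ shows every module is semi-regular cotorsion.

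For $(2)\Leftrightarrow(3)$ I would argue cheaply from the perfect cotorsion pair $(\mathcal{F}_{sr},\mathcal{C}_{sr})$ of Theorem \ref{204}. Since projectives always lie in $\mathcal{F}_{sr}$, statement $(2)$ says exactly that $\mathcal{F}_{sr}$ coincides with the class of projective modules. If $(2)$ holds, then $\mathcal{C}_{sr}=\mathcal{F}_{sr}^{\perp}$ is the right $\Ext^1_R$-orthogonal of the projectives, which is the class of all modules, giving $(3)$. Conversely, if $(3)$ holds, then $\mathcal{C}_{sr}$ is everything, so $\mathcal{F}_{sr}={}^{\perp}\mathcal{C}_{sr}$ is the class of $M$ with $\Ext^1_R(M,N)=0$ for all $N$, i.e. the projectives, which is $(2)$.

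The heart of the proof is $(2)\Rightarrow(1)$. First, since every flat module is semi-regular flat, $(2)$ forces every flat module to be projective, so $R$ is a perfect ring by Bass's theorem. Next I would establish that every commutative perfect ring is a $\DQ$ ring. Writing the (semiperfect) ring $R$ as a finite product of local rings and using that a finite product of $\DQ$ rings is $\DQ$, this reduces to the local case. In a local perfect ring $(R,\fkm)$ the radical $\fkm$ is $T$-nilpotent, hence nil; therefore any finitely generated ideal $K$ with $K\subseteq\fkm$ is generated by finitely many nilpotents and is itself nilpotent, and if $K\neq 0$ then choosing the least $N$ with $K^{N}=0$ yields $0\neq K^{N-1}\subseteq(0:_R K)$, so $K$ is not dense. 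Since any finitely generated ideal not contained in $\fkm$ contains a unit and equals $R$, the only finitely generated dense ideal of a local perfect ring is $R$; consequently the only finitely generated semi-regular ideal is $R$, i.e. $R$ is $\DQ$.

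Finally I would feed this back into Theorem \ref{201}: a $\DQ$ ring has every module semi-regular flat, and combined with $(2)$ this makes every module projective, so $R$ is semisimple. The main obstacle is the middle claim that perfect implies $\DQ$; the rest is formal. The delicate points there are the reduction to local factors and the nil-implies-nilpotent argument for finitely generated subideals of the radical, which is precisely what produces the nonzero annihilator obstructing density.
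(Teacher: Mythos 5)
Your proposal is correct, and its skeleton matches the paper's: the implications out of $(1)$ are trivial, $(2)\Leftrightarrow(3)$ is read off from the cotorsion pair of Theorem \ref{204} exactly as you do, and $(2)\Rightarrow(1)$ begins in both cases by observing that flat modules are semi-regular flat, so $(2)$ forces flat $=$ projective and hence $R$ perfect, and ends in both cases by invoking Theorem \ref{201} to make every module semi-regular flat and then projective. The genuine difference is the bridge in the middle of $(2)\Rightarrow(1)$. The paper gets from ``perfect'' to Theorem \ref{201} through finitistic dimensions: perfect gives $\FPD(R)=0$, hence $\fPD(R)=0$, which is item $(2)$ of Theorem \ref{201} (resting on the cited equivalence $\fPD(R)=0\Leftrightarrow \DQ$ from \cite{fkxs20}). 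You instead prove directly that a commutative perfect ring is a $\DQ$ ring: decompose $R$ as a finite product of local perfect rings, note that a finite product of $\DQ$ rings is $\DQ$ (true, since ideals, finite generation, annihilators and hence density all decompose componentwise), and in the local case use that the $T$-nilpotent radical is nil, so a nonzero finitely generated subideal of $\fkm$ is nilpotent and its last nonzero power annihilates it, killing density. This is a sound and pleasantly self-contained argument; what the paper's route buys is brevity and avoidance of the structure theory of perfect rings, at the cost of leaning on the external characterization of $\fPD(R)=0$, while your route buys an elementary, verifiable proof of ``perfect $\Rightarrow\DQ$'' at the cost of a couple of auxiliary facts (the product reduction and nil-implies-nilpotent for finitely generated ideals) that you should state and justify explicitly if you write it up.
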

\begin{proof} $(1)\Rightarrow (2)$ and $(1)\Rightarrow (3)$: Trivial.

$(2)\Rightarrow (1)$: Since any flat module is semi-regular flat, any flat module is projective by $(2)$. So $R$ is a perfect ring, that is, $\FPD(R)=0$. So $\fPD(R)=0$. By Theorem \ref{201}, any $R$-module  is semi-regular flat module, so is projective by $(2)$ again. Consequently, $R$ is a semi-simple ring.

 $(2)\Leftrightarrow (3)$: It follows by Theorem \ref{204}.
\end{proof}

\begin{theorem}\label{203}
 Let $R$ be a ring. Then the following statements are equivalent:
\begin{enumerate}
 \item $R$ is a strong \Prufer\ ring;
 \item any submodule of a semi-regular flat $R$-module  is semi-regular flat;
 \item any submodule of a flat $R$-module  is semi-regular flat;
 \item any ideal of $R$ is semi-regular flat;
 \item any finitely generated ideal of $R$ is semi-regular flat;
 \item any finitely generated semi-regular ideal  of $R$ is flat;
 \item any finitely generated semi-regular ideal  of $R$ is projective;
 \item any quotient of a semi-regular coflat $R$-module  is semi-regular coflat;
 \item any $h$-divisible $R$-module is semi-regular coflat.
\end{enumerate}
\end{theorem}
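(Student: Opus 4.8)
The plan is to prove the equivalence of the nine conditions characterizing strong \Prufer\ rings by establishing a cycle through the module-theoretic conditions, supplemented by a few direct implications that exploit the cotorsion-pair machinery from Theorem \ref{204} and the homological dictionary already assembled in Lemma \ref{101}. The backbone will be the chain $(1)\Rightarrow(2)\Rightarrow(3)\Rightarrow(4)\Rightarrow(5)\Rightarrow(6)\Rightarrow(7)\Rightarrow(1)$, with conditions $(8)$ and $(9)$ attached by proving $(2)\Leftrightarrow(8)$ via the duality between $\Tor$ and $\Ext$, and $(8)\Rightarrow(9)\Rightarrow(1)$ using that $h$-divisible modules are quotients (indeed, epimorphic images) of injective, hence semi-regular coflat, modules.

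\textbf{The core implications.} First I would address the cheap steps. The implications $(2)\Rightarrow(3)$ and $(4)\Rightarrow(5)$ are trivial (restricting the class of modules, resp.\ ideals), and $(7)\Rightarrow(6)$ is immediate since projective modules are flat. For $(3)\Rightarrow(4)$, note that any ideal of $R$ is a submodule of the flat module $R$, so it is semi-regular flat by hypothesis. The substantive module-theoretic content lives in $(5)\Rightarrow(6)$ and $(6)\Rightarrow(7)$: given a finitely generated semi-regular ideal $I$, condition $(5)$ says $I$ is semi-regular flat, and one wants to upgrade this to genuine flatness, then to projectivity. For flatness I would test $\Tor_1^R(R/J, I)$ against \emph{all} finitely generated ideals $J$ (not just semi-regular ones) and use the structure of strong \Prufer\ rings — specifically that a finitely generated semi-regular ideal is locally principal (the defining property from \cite{AA85}) — together with the characterization of flatness via local principality; for projectivity one uses that a finitely generated flat module over a ring where such ideals are locally free of rank one is projective.

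\textbf{The ring-theoretic closing steps.} The heart of the argument is $(6)\Rightarrow(1)$ and the opening step $(1)\Rightarrow(2)$. For $(1)\Rightarrow(2)$, I would take a semi-regular flat module $M$ and a submodule $N$, and show $\Tor_1^R(R/I,N)=0$ for every finitely generated semi-regular ideal $I$; localizing at a maximal ideal $\fkm$, the strong \Prufer\ hypothesis makes $I_\fkm$ principal, where $\Tor$-vanishing against a principal (and semi-regular, hence regular after localization) ideal becomes tractable via the flatness of $M_\fkm$ over the valuation-like local ring $R_\fkm$. For $(6)\Rightarrow(1)$ I would show that flatness of every finitely generated semi-regular ideal forces each such ideal to be locally principal: flatness gives, after localizing at $\fkm$, that $I_\fkm$ is flat and finitely generated over the local ring $R_\fkm$, hence free, and semi-regularity pins the rank to one, so $I_\fkm$ is principal — which is exactly the strong \Prufer\ condition.

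\textbf{The main obstacle} will be the localization argument underpinning $(1)\Leftrightarrow(6)$: passing the semi-regularity and $\Tor$-vanishing conditions through localization at a maximal ideal and controlling the behavior of dense sub-ideals under this process. Dense-ness is not in general a local property, so I would need to verify carefully that a finitely generated semi-regular ideal localizes to a regular (hence principal, in the strong \Prufer\ setting) ideal, and that flatness of $I$ is detected locally by the vanishing of the relevant $\Tor$ — this is where the interplay between the three distinct notions (dense, semi-regular, regular) from the introduction must be handled with care rather than by a routine computation.
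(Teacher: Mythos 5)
Your overall architecture (a cycle through the module-theoretic conditions, closed by a localization argument for $(1)\Leftrightarrow(6)$, with $(8)$ and $(9)$ attached separately) matches the paper's in spirit, and your treatments of $(1)\Rightarrow(6)$, $(6)\Rightarrow(1)$ and $(6)\Rightarrow(2)$ are essentially the paper's arguments. But there is a genuine circularity in your chain at $(5)\Rightarrow(6)$: you propose to upgrade semi-regular flatness of a finitely generated semi-regular ideal $I$ to flatness by "using the structure of strong \Prufer\ rings --- specifically that a finitely generated semi-regular ideal is locally principal." That is condition $(1)$, which is not available at that point of the cycle $(2)\Rightarrow\cdots\Rightarrow(5)\Rightarrow(6)\Rightarrow(7)\Rightarrow(1)$; if $(5)\Rightarrow(6)$ secretly assumes $(1)$, then conditions $(2)$--$(5)$ are never shown to imply $(1)$ and the theorem is not proved. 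The ingredient you are missing is the $\Tor$-swap the paper uses: for $I$ finitely generated semi-regular and $J$ finitely generated, $\Tor_1^R(R/J,I)\cong\Tor_2^R(R/I,R/J)\cong\Tor_1^R(R/I,J)$, which makes $(5)\Leftrightarrow(6)$ an immediate formal equivalence with no appeal to local principality. Your worry about denseness not localizing is also a red herring here --- the paper sidesteps it by using $(0:_{R_{\m}}I_{\m})=(0:_RI)_{\m}=0$ for finitely generated $I$, so the localized generator is regular; semi-regularity itself is never localized.

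Two further steps are underspecified in a way that matters. For $(2)\Leftrightarrow(8)$, duality via $\Hom_R(-,E)$ only gives you $(8)\Rightarrow(2)$ (a submodule $N\leq M$ dualizes to a quotient $\Hom_R(M,E)\twoheadrightarrow\Hom_R(N,E)$, and Lemma \ref{101} converts back); the converse direction would require that every semi-regular coflat module dualize to a semi-regular flat one, which is exactly the semi-regular coherence condition of Theorem \ref{s-r-chase} and is not free. The paper instead proves $(7)\Rightarrow(8)$ from $\pd_R(R/I)\leq 1$. Similarly, you assert $(9)\Rightarrow(1)$ but give no argument; the paper's route is $(9)\Rightarrow(7)$ by dimension shifting, $\Ext^1_R(I,M)\cong\Ext^2_R(R/I,M)\cong\Ext^1_R(R/I,E/M)=0$ with $E/M$ $h$-divisible, and you would need to supply something of this kind to close the loop on $(9)$.
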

\begin{proof} $(2)\Rightarrow (3)\Rightarrow (4)\Rightarrow (5)$, $(7)\Rightarrow (6)$ and $(8)\Rightarrow (9)$: Trivial.

$(5)\Leftrightarrow (6)$: Let $I$ be a finitely generated semi-regular ideal of $R$ and $J$ a finitely generated ideal  of $R$. Then we have  $\Tor_1^R(R/J, I)\cong\Tor_2^R(R/I, R/J)\cong\Tor_1^R(R/I, J)$.
Consequently, $J$ is semi-regular flat if and only if $I$ is flat.

$(6)\Rightarrow (1)$: Let $I$ be a finitely generated semi-regular ideal of $R$ and $\m$  a maximal ideal of  $R$. Then $I_{\m}$ is finitely generated flat $R_{\m}$-ideal. By \cite[Lemma 4.2.1]{g} and \cite[Theorem 2.5]{M89}, we have $I_{\m}$ is a free $R_{\m}$-ideal. So the rank of  $I_{\m}$ is at most $1$. Consequently, $I_{\m}$ is a principal ideal of $R_{\m}$.

$(1)\Rightarrow (6)$: Let $I$ be a finitely generated semi-regular ideal of $R$ and $\m$  a maximal ideal of  $R$.  Then $I_{\m}$ is a principal $R_{\m}$-ideal. Suppose $I_{\m}=\langle \frac{x}{s}\rangle$. Then  $(0:_{R_{\m}}\frac{x}{s})=(0:_{R_{\m}}I_{\m})=(0:_RI)_{\m}=0$ by \cite[Exercise 1.72] {fk16}.  Thus $\frac{x}{s}$ is regular element. So $I_{\m}\cong R_{\m}$. Consequently, $I$ is a flat $R$-ideal.

$(6)\Rightarrow (4)$:  Let $I$ be a finitely generated semi-regular ideal of $R$  and $K$  an ideal of $R$ . Then we have $\Tor_1^R(R/I, K)\cong\Tor_1^R(R/K, I)=0$ by (6). So $K$ is  semi-regular flat.

$(6)\Rightarrow (2)$: Let $M$ be a semi-regular flat module and $N$  a submodule of $M$. Suppose $I$ is a finitely generated semi-regular ideal, then $I$ is a flat ideal. Thus $\fd_R(R/I)\leq 1$. Consider the exact sequence $$\Tor_2^R(R/I,M/N)\rightarrow \Tor_1^R(R/I,N)\rightarrow \Tor_1^R(R/I,M).$$ Since $\Tor_2^R(R/I,M/N)=\Tor_1^R(R/I,M)=0$, we have $\Tor_1^R(R/I,N)=0$. So $N$ is a semi-regular flat module.

$(6)\Rightarrow (7)$: It follows from \cite[Corollary 3.1]{V69}.

$(9)\Rightarrow (7)$:  Let $I$ be a finitely generated semi-regular ideal of $R$  and $M$ an $R$-module. Consider the exact sequence $0\rightarrow M\rightarrow E\rightarrow N\rightarrow 0$, where $E$ is an injective module. Then $N$ is an $h$-divisible module. Consequently $\Ext^1_R(I,M)\cong\Ext^2_R(R/I,M)\cong\Ext^1_R(R/I,N)=0$. So $I$ is a projective ideal of $R$.

$(7)\Rightarrow (8)$: Let $I$ be a finitely generated semi-regular ideal of $R$. Then $I$ is a projective ideal. Consequently, $\pd_R(R/I)\leq 1$. Suppose $0\rightarrow L\rightarrow M\rightarrow N\rightarrow 0$ is a short exact sequence, where $M$ is a semi-regular coflat module. Then we have an exact sequence  $\Ext^1_R(R/I,M)\rightarrow \Ext^1_R(R/I,N)\rightarrow \Ext^2_R(R/I,L)$. Since $M$ is a semi-regular coflat module, $\Ext^1_R(R/I,M)=0$. Since $\pd_R(R/I)\leq 1$, we have $\Ext^2_R(R/I,L)=0$. Thus $\Ext^1_R(R/I,N)=0$. So $N$ is a semi-regular coflat module.
\end{proof}

\section{Semi-regular coherent rings}

Recall from \cite{g} that a ring $R$ is called a coherent ring if any finitely generated ideal  is finitely presented. Some important rings are all coherent, such as Noetherian rings, \Prufer\ domains. However, strong \Prufer\ rings are not coherent in general. So we introduce the notion of semi-regular coherent rings and give some new characterization of strong  \Prufer\ rings using semi-regular flat (pre)envelopes.

\begin{definition}\label{w-phi-coherent }
A ring $R$ is called a semi-regular coherent ring if any finitely generated semi-regular ideal $I$ is finitely presented.
\end{definition}

Let $R$  be non-semi-hereditary ring with weak global dimension at most $1$. Then $R$ is strong \Prufer\ ring, but $R$ is not coherent (See \cite[Corollary 4.2.19]{g}). The following result shows that any strong \Prufer\ ring is semi-regular coherent.
\begin{proposition}\label{sp-sc}
Suppose $R$ is a strong \Prufer\ ring, then $R$ is semi-regular coherent.
\end{proposition}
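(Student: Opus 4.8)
The plan is to reduce the statement to the elementary fact that finitely generated projective modules are finitely presented, feeding in the characterization of strong \Prufer\ rings already established in Theorem \ref{203}. Concretely, let $I$ be an arbitrary finitely generated semi-regular ideal of $R$; the task is to exhibit $I$ as finitely presented.

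First I would invoke Theorem \ref{203}. Since $R$ is a strong \Prufer\ ring, the equivalence $(1)\Leftrightarrow(7)$ of that theorem shows that every finitely generated semi-regular ideal of $R$ is projective. In particular $I$ is a finitely generated projective $R$-module. This is the only place where the hypothesis that $R$ is strong \Prufer\ is used.

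Next I would apply the standard module-theoretic fact that a finitely generated projective module is automatically finitely presented. The argument is routine: because $I$ is finitely generated, choose a surjection $\pi\colon R^n\to I$ with $R^n$ free of finite rank, and set $K=\ker\pi$. Since $I$ is projective, the short exact sequence $0\to K\to R^n\to I\to 0$ splits, so $R^n\cong I\oplus K$; hence $K$ is a direct summand of $R^n$ and is therefore itself finitely generated. This exhibits $I$ as the cokernel of a map between finite free modules, i.e. $I$ is finitely presented. As $I$ was an arbitrary finitely generated semi-regular ideal, every such ideal is finitely presented, which is precisely the definition of a semi-regular coherent ring, completing the proof.

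I do not anticipate any genuine obstacle here: beyond bookkeeping, the entire content is the passage from ``strong \Prufer'' to ``finitely generated semi-regular ideals are projective,'' which Theorem \ref{203} supplies for free; once projectivity is in hand, finite presentation is a general fact requiring no special feature of $R$. An alternative route would start from the flatness condition $(6)$ together with the local principality in the definition of a strong \Prufer\ ring, but invoking projectivity directly is the cleanest path.
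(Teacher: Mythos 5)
Your proof is correct and follows exactly the same route as the paper: invoke Theorem \ref{203} to see that a finitely generated semi-regular ideal is projective, then use the standard fact that finitely generated projective modules are finitely presented. The paper simply states this in one line, while you spell out the splitting argument.
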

\begin{proof} Suppose $I$ is a finitely generated semi-regular ideal of $R$. Then, by Theorem \ref{203}, $I$ is a projective ideal of $R$, and thus is a finitely presented ideal of $R$.
\end{proof}

Some examples of non-integral domains are constructed by idealization $R(+)M$, where $M$ is an $R$-module (See \cite{H88}). Set $R(+)M$ isomorphic to $R\oplus M$ as $R$-modules. Define
\begin{enumerate}
 \item ($r,m$)+($s,n$)=($r+s,m+n$),
 \item ($r,m$)($s,n$)=($rs,sm+rn$).
\end{enumerate}
Under these operations, $R(+)M$ is a commutative ring with the identity $(1,0)$.
\begin{proposition}\label{w-f-n}
Let $D$ be a coherent domain and $K$ its quotient field. Set $R=D(+)K$, then $R$ is a semi-regular coherent ring. Moreover, $R$ is a coherent ring if and only if $D$ is a field.
\end{proposition}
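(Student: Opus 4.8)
The plan is to first determine, inside $R=D(+)K$, exactly which finitely generated ideals are semi-regular and to describe their precise shape; the two assertions then follow by separate module-theoretic computations.

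\emph{Structure of semi-regular ideals.} Let $\pi\colon R\to D$, $(d,x)\mapsto d$, be the canonical projection, whose kernel is the square-zero ideal $0(+)K$, so that $R/(0(+)K)\cong D$. I would first record that the regular elements of $R$ are exactly the $(d,x)$ with $d\neq 0$: if $d\neq 0$ then multiplication by $d$ is injective both on the domain $D$ and on the torsion-free $D$-module $K$, whereas every $(0,x)$ is annihilated by $0(+)K$. A one-line annihilator computation then gives $(0:_R I)=0$ if and only if $\pi(I)\neq 0$, so for $R$ the notions dense, regular, and "nonzero image under $\pi$" all coincide; in particular a finitely generated ideal is semi-regular precisely when $\pi(I)\neq 0$. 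The decisive point is that any such $I$ already contains $0(+)K$, since for $(d,x)\in I$ with $d\neq 0$ the products $(d,x)(0,y)=(0,dy)$ exhaust $0(+)K$ as $y$ runs over $K$. Hence every finitely generated semi-regular ideal has the form $I=\mathfrak a(+)K$ with $\mathfrak a=\pi(I)=\langle d_1,\dots,d_n\rangle$ a nonzero finitely generated ideal of $D$, and $I=\langle (d_1,0),\dots,(d_n,0)\rangle$.

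\emph{Semi-regular coherence.} Given such an $I$, I would present it via $\phi\colon R^n\to I$, $e_i\mapsto (d_i,0)$, and compute that a tuple $((a_i,u_i))_i$ lies in $\ker\phi$ if and only if $\sum a_i d_i=0$ in $D$ and $\sum d_i u_i=0$ in $K$; that is, the $D$-parts form a syzygy of $(d_1,\dots,d_n)$ over $D$ and the $K$-parts a syzygy over $K$. Since $D$ is coherent, $\mathfrak a$ is finitely presented, so $\mathrm{Syz}_D(d_1,\dots,d_n)$ is generated by finitely many $s_1,\dots,s_m\in D^n$; let $\tilde s_j\in R^n$ denote their images under $d\mapsto (d,0)$. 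I claim these finitely many elements generate $\ker\phi$ over $R$: multiplying $\tilde s_j$ by $(c,0)$ recovers all $D$-syzygies, while multiplying by $(0,w)$ yields the tuple with zero $D$-part and $K$-part $w\,s_j$, so the only thing left is that the $K$-span of $\{s_1,\dots,s_m\}$ is the full space of $K$-syzygies. This follows from flatness of $K$: tensoring $0\to\mathrm{Syz}_D\to D^n\to\mathfrak a\to 0$ with $K$ and using $\mathfrak a\otimes_D K\cong K$ identifies $\mathrm{Syz}_D\otimes_D K$ with $\mathrm{Syz}_K$, i.e.\ $K\cdot\mathrm{Syz}_D=\mathrm{Syz}_K$. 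Thus $\ker\phi$ is finitely generated and $I$ is finitely presented.

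\emph{The coherence dichotomy and the main obstacle.} If $D$ is a field then $K=D$ and $R=D(+)D$ is a finite-dimensional $D$-algebra, hence Noetherian and coherent. For the converse I would argue by contraposition: if $D$ is not a field then $K$ is not finitely generated as a $D$-module, so $0(+)K$ is not finitely generated over $R$ (its $R$-action factors through $D$). The principal ideal $0(+)D=\langle (0,1)\rangle$ then has presentation map $R\to 0(+)D$, $(c,w)\mapsto (0,c)$, whose kernel is exactly $0(+)K$; hence $\langle (0,1)\rangle$ is finitely generated but not finitely presented and $R$ is not coherent. The main obstacle is the finite generation of $\ker\phi$ in the previous step: the $K$-syzygies form an $(n-1)$-dimensional $K$-vector space that is nowhere near finitely generated over $R$, and the crux is to recognize that sweeping the finitely many $D$-syzygies by the square-zero elements $(0,w)$ already produces all of them, which is precisely the identity $K\cdot\mathrm{Syz}_D=\mathrm{Syz}_K$. (Note this argument cannot be shortcut through Proposition \ref{sp-sc}, since $R$ need not be strong \Prufer\ unless $D$ is Pr\"ufer.)
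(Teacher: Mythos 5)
Your proof is correct, and its overall strategy matches the paper's: classify the finitely generated semi-regular ideals of $R$ as the ideals $\mathfrak a(+)K$ with $\mathfrak a$ a nonzero finitely generated ideal of $D$, deduce finite presentation from coherence of $D$ together with flatness over $D$, and derive non-coherence from the fact that $K$ (hence $0(+)K=\ker(R\twoheadrightarrow (0,1)R)$) is not finitely generated over $D$ when $D$ is not a field. The differences are in execution. For the classification, the paper invokes structural results on idealizations (that $R=D(+)K$ is a strongly $\phi$-ring and that its ideals are all of the form $I(+)K$ or $0(+)L$), whereas you prove directly that a finitely generated ideal is semi-regular iff it meets $R\setminus(0(+)K)$, and that any such ideal already contains $0(+)K$ via $(d,x)(0,y)=(0,dy)$ and $dK=K$; this makes the argument self-contained and also pins down the generating set $\{(d_i,0)\}$. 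For finite presentation, the paper tensors a finite $D$-presentation of $\mathfrak a$ with the flat $D$-module $R$ and observes the result is $R$-exact, while you compute the kernel of $R^n\to I$ explicitly as $\mathrm{Syz}_D\oplus\mathrm{Syz}_K$ and use flatness of $K$ to show $K\cdot\mathrm{Syz}_D=\mathrm{Syz}_K$ — which is exactly the point that makes the paper's "easy to verify" step work, so your version usefully makes that step explicit. Your closing remark that the result cannot be obtained from Proposition \ref{sp-sc} (since $R$ need not be strong \Prufer\ for general coherent $D$) is a pertinent observation that the paper leaves implicit.
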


\begin{proof} Following from \cite[Remark 1]{FA05}, $R$ is a strongly $\phi$-ring, so $\Nil(R)=0(+)K$ and any ideal of $R$ can compare with $\Nil(R)$. Then, by \cite[Corollary 3.4]{AW09}, any ideal of $R$ is of the form $I(+)K$ and $0(+)L$, where $I$ is a nonzero ideal of $D$ and  $L$ is a $D$-submodule of $K$.
If $I$ is a nonzero ideal of $D$, then $I(+)K$ is a semi-regular ideal of $R$ obviously. Let $I(+)K$ be  finitely generated $R$-ideal  generated by $\{(d_1,x_1),...,(d_n,x_n)\}$. Then it is easy to verify $I$ is generated by $\{d_1,...,d_n\}$. Since $D$ is a coherent ring, then there is a short exact sequence $D^m\rightarrow D^n\rightarrow I\rightarrow 0$ of $D$-modules. Since $R$ is a flat $D$-module, we obtain an exact sequence  $R^m\rightarrow R^n\rightarrow I(+)K\rightarrow 0$ of $D$-modules by tensoring $R$. It is easy to verify this is also an $R$-exact sequence. So $I(+)K$ is a finitely presented $R$-ideal. Since any element in the finitely generated $R$-ideal $0(+)L$ is nilpotent, the ideal of the form $0(+)L$  is not semi-regular. Thus $R$ is a semi-regular coherent ring.

Obviously, if $D$ is a field, then $R$ is a coherent ring. Next we will show that if $D$ is not a field, then $R$ is not coherent. Since $(0,1)R$ is a finitely generated $R$-ideal. Consider the natural short exact sequence $0\rightarrow L\rightarrow R\rightarrow (0,1)R\rightarrow 0$, we have  $L=\Nil(R)=0(+)K$. Since $D$ is not a field, then $K$ is not finitely generated over $D$. By \cite[Lemma 2.2]{B03},  $\Nil(R)$ is not a finitely generated $R$-ideal. Consequently, $(0,1)R$ is not finitely presented. So $R$ is not coherent.
\end{proof}

\begin{theorem}\label{s-r-chase}
Let $R$ be a ring. Then the following statements are equivalent:
\begin{enumerate}
 \item $R$ is semi-regular coherent;
 \item  any direct product of semi-regular flat modules is semi-regular flat;
 \item any direct product of flat modules is semi-regular flat;
 \item any direct product of $R$ is semi-regular flat;

 \item any direct limit of  semi-regular coflat module is semi-regular coflat;
 \item any quotient of semi-regular coflat modules is semi-regular coflat;
 \item the class of semi-regular coflat modules is precovering;
 \item the class of  semi-regular coflat modules is covering;

 \item $\Hom_R(N,E)$ is semi-regular flat module for any semi-regular coflat module $N$ and any injective module $E$;
 \item if $E$ is injective cogenerator, then $\Hom_R(N,E)$ is semi-regular flat  for any semi-regular coflat module $N$;
 \item $\Hom_R(\Hom_R(M,E_1),E_2)$ is semi-regular flat for any semi-regular flat module $M$ and any injective modules $E_1$ and $E_2$;
 \item if $E_1$ and $E_2$ are injective cogenerators, then $\Hom_R(\Hom_R(M,E_1),E_2)$ is semi-regular flat  for any semi-regular flat module $M$ .
\end{enumerate}
\end{theorem}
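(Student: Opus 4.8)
The plan is to read Theorem~\ref{s-r-chase} as a semi-regular analogue of Chase's coherence theorem, using (1) and (4) as hubs: I would prove $(1)\Rightarrow(k)$ for each $k$ from the fact that semi-regular coherence makes every $R/I$, with $I$ finitely generated semi-regular, finitely presented, and route each $k$ back through the key implication $(4)\Rightarrow(1)$.

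First I would settle the product cluster $(1)\Leftrightarrow(2)\Leftrightarrow(3)\Leftrightarrow(4)$. Here $(2)\Rightarrow(3)\Rightarrow(4)$ is immediate, since flat modules are semi-regular flat and $R$ is flat. For $(4)\Rightarrow(1)$ I would use Lemma~\ref{101}: the module $R^X$ is semi-regular flat precisely when $\sigma_I\colon I\otimes_RR^X\to IR^X$ is an isomorphism, and this map factors as $I\otimes_RR^X\xrightarrow{\mu_I}I^X\hookrightarrow R^X$ with $\mu_I$ surjective because $I$ is finitely generated; hence $(4)$ forces $\mu_I$ to be bijective for every set $X$, and the classical criterion that a finitely generated $I$ is finitely presented iff $I\otimes_RR^X\to I^X$ is bijective for all $X$ then yields $(1)$. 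Conversely, if $R$ is semi-regular coherent then $I$ is finitely presented, so $R/I$ has a free resolution $R^m\to R^n\to R\to R/I\to0$ with finitely generated terms (type $\mathrm{FP}_2$); consequently $\Tor_1^R(R/I,-)$ commutes with arbitrary products and $\Tor_1^R(R/I,\prod_iM_i)\cong\prod_i\Tor_1^R(R/I,M_i)=0$ for semi-regular flat $M_i$, which is $(2)$.

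The same $\mathrm{FP}_2$ resolution drives the forward spokes. For the duality conditions I would combine the flat--coflat duality of Lemma~\ref{101} with the natural isomorphism $\Tor_1^R(R/I,\Hom_R(N,E))\cong\Hom_R(\Ext^1_R(R/I,N),E)$, valid because $R/I$ is $\mathrm{FP}_2$ and $E$ is injective: applied to a semi-regular coflat $N$ it gives $(1)\Rightarrow(9)$, after which $(9)\Rightarrow(10)$ and $(11)\Rightarrow(12)$ are restrictions to cogenerators and $(9)\Rightarrow(11)$ follows by taking $N=\Hom_R(M,E_1)$. For the limit and covering conditions, the $\mathrm{FP}_2$ resolution also makes $\Ext^1_R(R/I,-)$ commute with direct limits, so the class $\mathcal{C}_{sr}$ of semi-regular coflat modules is closed under direct limits; since it is always closed under products and extensions (as $\Ext$ commutes with products in its second argument) and, under coherence, under pure submodules, it becomes definable, and the theory of covers then makes it covering, yielding $(1)\Rightarrow(5)$ and $(1)\Rightarrow(8)\Rightarrow(7)$.

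Returning each spoke to the hub is where the real work lies, and the step I expect to be the genuine obstacle is closing the duality cluster, $(10)\Rightarrow(1)$ and $(12)\Rightarrow(1)$: the symmetric duality $M\in\mathcal{F}_{sr}\Leftrightarrow\Hom_R(M,E)\in\mathcal{C}_{sr}$ of Lemma~\ref{101} cannot by itself supply the asymmetric coherence content ``$N$ semi-regular coflat $\Rightarrow\Hom_R(N,E)$ semi-regular flat,'' so I would have to apply $(10)$ to semi-regular coflat modules that are not a priori character modules of flats (for instance direct sums of injectives) and then combine the resulting flatness with closure of $\mathcal{F}_{sr}$ under pure submodules (Lemma~\ref{102}) to force $R^X$ semi-regular flat, i.e.\ $(4)$; for the limit and covering cluster I would show that closure of $\mathcal{C}_{sr}$ under direct limits is equivalent to $R/I$ being $\mathrm{FP}_2$ (a direct-limit analogue of the Chase criterion) and use approximation theory to tie $(7)$ and $(8)$ to this closure. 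Two further subtleties need care: the $\Tor$--$\Ext$ exchange genuinely requires $R/I$ to be $\mathrm{FP}_2$, i.e.\ $I$ finitely presented rather than merely finitely generated; and condition (6) is delicate, since for $0\to L\to M\to N\to0$ with $M$ semi-regular coflat the long exact sequence gives only $\Ext^1_R(R/I,N)\hookrightarrow\Ext^2_R(R/I,L)$, so closure of $\mathcal{C}_{sr}$ under arbitrary quotients would force $\pd_R(R/I)\le1$, the strong \Prufer\ condition of Theorem~\ref{203} --- I would therefore verify whether (6) is intended as closure under pure (or direct-limit) quotients.
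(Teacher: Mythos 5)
Your overall architecture matches the paper's: hubs at (1) and (4), Lenzing's criterion \cite[Theorem 2]{ELMUT1969} for $(4)\Rightarrow(1)$, the finite presentation of $I$ (equivalently, $R/I$ of type $\mathrm{FP}_2$) to push products through $\Tor_1^R(R/I,-)$ and direct limits through $\Ext^1_R(R/I,-)$ for the forward spokes, and the exchange $\Tor_1^R(R/I,\Hom_R(N,E))\cong\Hom_R(\Ext^1_R(R/I,N),E)$ for $(1)\Rightarrow(9)$. Your worry about condition (6) is well founded and worth recording: as literally written it is identical to condition (8) of Theorem \ref{203}, which that theorem proves equivalent to $R$ being strong \Prufer; since a Noetherian ring such as $k[x,y]$ is semi-regular coherent but not strong \Prufer\ (the semi-regular ideal $(x,y)$ is not locally principal), (6) can only be correct if read as closure under \emph{pure} quotients, which is what the cited equivalence $(5)\Leftrightarrow(6)\Leftrightarrow(7)\Leftrightarrow(8)$ from \cite[Lemma 3.4]{zxl20} should supply.

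The genuine gap is the return implication for the duality cluster, $(10)\Rightarrow(1)$ (equivalently $(12)\Rightarrow(1)$), which you correctly single out as the obstacle but do not close. Your proposed repair --- apply (10) to semi-regular coflat modules such as direct sums of injectives and then use pure-submodule closure of $\mathcal{F}_{sr}$ to force $R^X$ semi-regular flat --- stalls at its first step: to know that $\bigoplus_X E_0$ is semi-regular coflat you would need $\Ext^1_R(R/I,-)$ to commute with that direct sum, and since $R/I$ is a priori only finitely presented (type $\mathrm{FP}_1$) you only get a monomorphism $\varinjlim\Ext^1_R(R/I,E_0^{\,n})\rightarrow\Ext^1_R(R/I,\bigoplus_X E_0)$ with zero source, which says nothing; the commutation you need is precisely the coherence being proved. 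The paper closes the loop by a different device: from (10) the map $I\otimes\Hom_R(N,E)\rightarrow R\otimes\Hom_R(N,E)$ is injective, hence so is the canonical map $\psi_I:I\otimes\Hom_R(N,E)\rightarrow\Hom_R(\Hom_R(I,N),E)$, and by \cite[Proposition 8.14(1)]{hh} this makes $I$ a Mittag-Leffler module; a finitely generated Mittag-Leffler module is finitely presented, again by Lenzing. Without that Mittag-Leffler input (or some substitute), the soft duality of Lemma \ref{101} alone cannot produce the asymmetric coherence content, exactly as you feared. The same remark applies to $(5)\Rightarrow(1)$, which you only name: the paper proves it by factoring any $I\rightarrow\varinjlim M_i$ through injective envelopes $E(M_j)$ to show $\varinjlim\Hom_R(I,M_i)\rightarrow\Hom_R(I,\varinjlim M_i)$ is surjective, and then concludes finite presentation of $I$.
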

\begin{proof} $(2)\Rightarrow (3)\Rightarrow (4)$: Trivial.

$(1)\Rightarrow (2)$: Let $I$ be a finitely generated semi-regular ideal of $R$  and $\{F_i\}_{i\in I}$  a family of semi-regular flat modules. Consider the following commutative diagram:
$$\xymatrix{
 I\otimes \prod_{i\in I}F_i \ar[d]_{\phi_I}\ar[r]^{\sigma} & I \prod_{i\in I}F_i \ar[d]_{}\\
 \prod_{i\in I}I\otimes F_i \ar[r]^{\cong} &\prod_{i\in I}IF_i.
}$$
Since $R$ is a semi-regular coherent ring, $I$ is finitely presented ideal. Then $\phi_I$ is an isomorphism, thus the epimorphism $\sigma$ is actually an isomorphism. Consequently, $\prod_{i\in I}F_i$ is semi-regular flat.

$(4)\Rightarrow (1)$: Let $I$ be a finitely generated semi-regular ideal of $R$. Consider the following commutative diagram:
$$\xymatrix{
 I\otimes \prod_{i\in I}R\ar[d]_{\phi_I}\ar[r]^{\sigma} & I \prod_{i\in I}R \ar[d]_{f}\\
 \prod_{i\in I}I\otimes R \ar[r]^{\cong} &\prod_{i\in I}I.
}$$
Since $I$ is a finitely generated ideal, then $f$ is an isomorphism. Since $\prod_{i\in I}R$ is a semi-regular flat module, the epimorphism  $\sigma: I\otimes \prod_{i\in I}R\rightarrow I\prod_{i\in I}R$ is an isomorphism. So $\phi_I$ is an isomorphism, thus $I$ is  finitely presented (See \cite[Theorem 2]{ELMUT1969}).

$(1)\Rightarrow (5)$: Let $I$ be a finitely generated semi-regular ideal of $R$ and $\{M_i\}_{i\in I}$  a direct system of  semi-regular coflat modules. Then $\lim\limits_{\longrightarrow }\Ext^1_R(R/I,M_i)=0$. Consider the short exact sequence $0\rightarrow I\rightarrow R\rightarrow R/I\rightarrow 0$, we have the following commutative diagram with rows exact:
$$\xymatrix{
 \ar[r]^{} &\lim\limits_{\longrightarrow } \Hom_R(R,M_i) \ar[d]_{\varphi_R}\ar[r]^{} & \lim\limits_{\longrightarrow }\Hom_R(I,M_i) \ar[r]^{}\ar[d]^{\varphi_I}& \lim\limits_{\longrightarrow }\Ext^1_R(R/I,M_i) \ar[r]^{}\ar[d]^{\varphi^1_{R/I}}&0 \\
 \ar[r]^{} &\Hom_R(R,\lim\limits_{\longrightarrow } M_i) \ar[r]^{} &\Hom_R(I,\lim\limits_{\longrightarrow } M_i)\ar[r]^{} & \Ext^1_R(R/I,\lim\limits_{\longrightarrow } M_i) \ar[r]^{} &0.
}$$
Since $R$ is a semi-regular coherent ring, $I$ is a finitely presented ideal, then $\varphi_{I}$ is an isomorphism. Since $\varphi_{R}$ is an isomorphism, then $\varphi^1_{R/I}$ is also an isomorphism. Consequently, $\lim\limits_{\longrightarrow } M_i$ is a semi-regular coflat module.

$(5)\Rightarrow (1)$: Let $I$ be a finitely generated semi-regular ideal, $\{M_i\}_{i\in I}$ a direct limit of  $R$-modules. Suppose $\alpha: I\rightarrow \lim\limits_{\longrightarrow }M_i$ is an $R$-homomorphism. For any $i\in I$, $E(M_i)$ is the injective  envelope of $M_i$, then $E(M_i)$ is a semi-regular coflat module. By (5), $\alpha$ can be extended to  be $\beta:R\rightarrow \lim\limits_{\longrightarrow }E(M_i)$. So there exists $j\in I$ such that $\beta$ can factor through $R\rightarrow E(M_j)$. Since the composition $I\rightarrow R\rightarrow E(M_j)\rightarrow E(M_j)/M_j$ becomes to be $0$ in the direct limit. We can assume  $I\rightarrow R\rightarrow E(M_j)$ factors through  $M_j$. Then $\alpha$ factor through $M_j$. So the natural epimorphism $\lim\limits_{\longrightarrow } \Hom_R(I,M_i)\rightarrow \Hom_R(I, \lim\limits_{\longrightarrow }M_i)$. So $I$ is a finitely presented ideal.

$(5)\Leftrightarrow (6)\Leftrightarrow (7)\Leftrightarrow (8)$: Follows from \cite[Lemma 3.4]{zxl20}.

$(1)\Rightarrow (9)$: Let $I$ be a finitely generated semi-regular ideal of $R$,  $E$  an injective module and $N$  a semi-regular coflat module. Consider the short exact sequence $0\rightarrow I\rightarrow R\rightarrow R/I\rightarrow 0$. Then we have the following commutative diagram with rows exact (Use $(-,-)$ to instead of $\Hom_R(-,-)$)
$$\xymatrix{
 0 \ar[r]^{} &\Tor_1^R(R/I,(N,E))\ar[d]_{\psi^1_{R/I}}\ar[r]^{} & I\otimes(N,E) \ar[r]^{}\ar[d]^{\psi_{I}}& R\otimes(N,E) \ar[r]^{}\ar[d]^{\psi_{R}}& R/I\otimes(N,E) \ar[r]^{}\ar[d]^{\psi_{R/I}}&0\\
 0 \ar[r]^{} &(\Ext_R^1(R/I,N),E) \ar[r]^{} &(I,N),E)\ar[r]^{} &(R,N),E) \ar[r]^{} &(R/I,N),E)\ar[r]^{} &0.
}$$
Since $R$ is a semi-regular coherent ring, then $I$ is finitely presented ideal. Thus $\psi_{R}$ and  $\psi_{I}$ are all isomorphisms, so $\psi^1_{R/I}$ is also an isomorphism. Since $N$ is a semi-regular coflat module, $\Ext_R^1(R/I,N)=0$. So $\Tor_1^R(R/I,\Hom_R(N,E))=0$, and thus $\Hom_R(N,E)$ is a semi-regular flat module.

$(9)\Rightarrow (10)$ and $(11)\Rightarrow (12)$: Trivial.

$(9)\Leftrightarrow (11)$ and $(10)\Leftrightarrow (12)$: Follow by Theorem \ref{101}.

$(10)\Rightarrow (1)$: Let $I$ be a finitely generated semi-regular ideal of $R$,  $N$  a semi-regular coflat module and $E$ an injective cogenerator. Then we have the following commutative diagram with rows exact:
$$\xymatrix{
 I\otimes\Hom_R(N,E)\ar[d]_{f}\ar[r]^{\psi_I} &\Hom_R(\Hom_R(I,N),E) \ar[d]_{g}\\
 R\otimes\Hom_R(N,E) \ar[r]^{\psi_R}_{\cong} &\Hom_R(\Hom_R(R,N),E).
}$$
Note that $f$ is a monomorphism by (10). So $\psi_I$ is a monomorphism. Thus, by {\cite[Propositon  8.14(1)]{hh}}, we have $I$ is an $\{R\}$-Mittag-Leffler module. Since $I$ is a finitely generated ideal,  $I$ is  finitely presented by \cite[Theorem 2]{ELMUT1969}.
\end{proof}

\begin{lemma}\label{207}
Let $R$ be a ring. Then $R$ is a semi-regular coherent ring if and only if the class $\mathcal{F}_{sr}$ of all semi-regular flat modules  is preenveloping.
\end{lemma}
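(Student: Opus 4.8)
The plan is to prove both implications by reducing everything to the product-closure characterization of semi-regular coherence already established in Theorem \ref{s-r-chase}, namely that $R$ is semi-regular coherent if and only if $\mathcal{F}_{sr}$ is closed under arbitrary direct products. The easy direction is that preenveloping forces semi-regular coherence, and I would handle it with the standard split-monomorphism trick. Recall first that $\mathcal{F}_{sr}$ is closed under direct summands, being the left half of the cotorsion pair of Theorem \ref{204}. Let $\{F_i\}_{i\in\Gamma}$ be any family of semi-regular flat modules and set $M=\prod_{i\in\Gamma}F_i$. Let $\phi\colon M\to F$ be an $\mathcal{F}_{sr}$-preenvelope. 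Since each $F_i\in\mathcal{F}_{sr}$, every projection $\pi_i\colon M\to F_i$ factors through $\phi$, say $\pi_i=g_i\phi$ for some $g_i\colon F\to F_i$. The induced map $g=(g_i)_{i}\colon F\to\prod_{i}F_i=M$ then satisfies $g\phi=\mathrm{id}_M$, so $\phi$ is a split monomorphism and $M$ is a direct summand of $F\in\mathcal{F}_{sr}$. Hence $M\in\mathcal{F}_{sr}$, and by Theorem \ref{s-r-chase} the ring $R$ is semi-regular coherent.

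For the converse, assume $R$ is semi-regular coherent, so that by Theorem \ref{s-r-chase} the class $\mathcal{F}_{sr}$ is closed under products; recall also from Lemma \ref{102} that it is closed under pure submodules. I would build preenvelopes via a solution-set argument. Fix an infinite cardinal $\kappa\geq|R|$, and for a given module $M$ put $\lambda=\max\{|M|,\kappa\}$. Consider the set $\mathcal{S}$ of representatives of isomorphism classes of pairs $(F,g)$ with $F\in\mathcal{F}_{sr}$, $|F|\leq\lambda$, and $g\colon M\to F$; this is genuinely a set, since there are only a set of modules of cardinality at most $\lambda$ up to isomorphism and a set of maps out of $M$ into each. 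Form the product map $\phi\colon M\to\prod_{(F,g)\in\mathcal{S}}F$, whose target lies in $\mathcal{F}_{sr}$ by product-closure. I claim $\phi$ is an $\mathcal{F}_{sr}$-preenvelope: given any $h\colon M\to G$ with $G\in\mathcal{F}_{sr}$, the image $h(M)$ has cardinality at most $|M|\leq\lambda$, so it is contained in a pure submodule $G'\subseteq G$ with $|G'|\leq\lambda$; since $\mathcal{F}_{sr}$ is closed under pure submodules, $G'\in\mathcal{F}_{sr}$, and the corestriction $M\to G'$ is a member of $\mathcal{S}$. Composing the projection of the product onto that factor with the inclusion $G'\hookrightarrow G$ recovers $h$, yielding the required factorization through $\phi$.

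The main obstacle is the set-theoretic input in the converse: the existence of a uniform cardinal bound $\kappa$, depending only on $|R|$, such that every subset of cardinality $\leq\lambda$ of an arbitrary $R$-module sits inside a pure submodule of cardinality $\leq\lambda$. This is a standard downward Löwenheim--Skolem type construction for pure submodules (a \emph{purity cardinal} argument), but it must be invoked with care, both to guarantee that $\mathcal{S}$ is a genuine set and to ensure that the factoring submodule $G'$ stays within the prescribed cardinality, so that the single map $\phi$ works simultaneously for every target in $\mathcal{F}_{sr}$. Everything else is formal, depending only on the closure properties of $\mathcal{F}_{sr}$ already recorded in Lemma \ref{102} and Theorems \ref{204} and \ref{s-r-chase}.
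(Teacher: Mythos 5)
Your proposal is correct and takes essentially the same route as the paper: the ``only if'' direction is the identical split-monomorphism/direct-summand trick combined with the product-closure criterion of Theorem \ref{s-r-chase}, and the ``if'' direction rests on $\mathcal{F}_{sr}$ being closed under products and pure submodules. The only difference is that where you carry out the solution-set/purity-cardinal construction by hand, the paper simply cites \cite[Corollary 6.2.2, Lemma 5.3.12]{EJ00}, which encapsulates exactly that argument.
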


\begin{proof} Suppose $R$ is a semi-regular coherent ring, then
$\mathcal{F}_{sr}$ is closed under  direct products. Since $\mathcal{F}_{sr}$ is closed under pure submodules, the class of semi-regular flat modules is preenveloping by \cite[Corollary 6.2.2, Lemma 5.3.12]{EJ00}. On the other hand, Suppose $\{F_i\}_{i\in I}$ is semi-regular flat modules. Suppose $\prod_{i\in I} F_i\rightarrow F$ is a semi-regular flat preenvelope, then there is a factorization $\prod_{i\in I} F_i\rightarrow F\rightarrow F_i$ for any  $i\in I$. So $\prod_{i\in I} F_i\rightarrow F\rightarrow \prod_{i\in I} F_i$ is an identity map. Thus $\prod_{i\in I} F_i$ is a direct summand of $F$. Hence $\prod_{i\in I} F_i$ is semi-regular flat. Consequently, $R$ is semi-regular coherent ring by Theorem \ref{s-r-chase}.
\end{proof}
\begin{corollary}\label{208}
Suppose ring $R$ is a semi-regular coherent ring and the class $\mathcal{F}_{sr}$ of all semi-regular flat modules is closed under inverse limits, then $\mathcal{F}_{sr}$ is enveloping.
\end{corollary}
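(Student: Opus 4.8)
The plan is to promote the preenvelope produced by Lemma~\ref{207} to an envelope, the closure of $\mathcal{F}_{sr}$ under inverse limits being exactly what makes this possible. Since $R$ is semi-regular coherent, Lemma~\ref{207} already gives every $R$-module $M$ an $\mathcal{F}_{sr}$-preenvelope $\phi\colon M\rightarrow F$ with $F\in\mathcal{F}_{sr}$. Recall that such a $\phi$ is an \emph{envelope} precisely when it is left minimal, i.e. every $g\in\End_R(F)$ with $g\phi=\phi$ is an automorphism. So the whole problem reduces to extracting, from the given preenvelope, a left minimal one; this is the inverse-limit dual of the familiar upgrade of a precover to a cover for classes closed under direct limits.

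To carry this out I would fix the preenvelope $\phi\colon M\rightarrow F$ and consider the collection $\mathcal{S}$ of all submodules $G\subseteq F$ with $\im\phi\subseteq G$, $G\in\mathcal{F}_{sr}$, and such that the corestriction $M\rightarrow G$ is again an $\mathcal{F}_{sr}$-preenvelope, ordered by reverse inclusion. A chain $\{G_\lambda\}$ in $\mathcal{S}$ forms a tower of monomorphisms whose inverse limit is the intersection $\bigcap_\lambda G_\lambda$; since $\mathcal{F}_{sr}$ is closed under inverse limits, this intersection lies in $\mathcal{F}_{sr}$ and serves as an upper bound for the chain, once one checks that its corestriction is still a preenvelope. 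Zorn's lemma then produces a minimal member $F_0\in\mathcal{S}$, with corestriction $\phi_0\colon M\rightarrow F_0$. Minimality forces left minimality: if $g\in\End_R(F_0)$ satisfies $g\phi_0=\phi_0$ and is injective, then $\im g\cong F_0$ is a member of $\mathcal{S}$ contained in $F_0$, so $\im g=F_0$ and $g$ is onto, hence an automorphism. Thus $\phi_0$ is an $\mathcal{F}_{sr}$-envelope of $M$, and $\mathcal{F}_{sr}$ is enveloping.

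The main obstacle is twofold and concentrated at the inverse-limit step. First, one must verify that the corestriction $M\rightarrow\bigcap_\lambda G_\lambda$ remains a preenvelope rather than a mere map into $\mathcal{F}_{sr}$; because inverse limits are only left exact, this lifting property across the intersection does not come for free and is the genuine content of the argument, handled by the compatible-family construction that closure under inverse limits supports. Second, the minimality-to-left-minimality step must also dispose of non-injective $g$ with $g\phi_0=\phi_0$; here one iterates, passing to $\lim\limits_{\longleftarrow}(F_0\xleftarrow{g}F_0\xleftarrow{g}\cdots)\in\mathcal{F}_{sr}$ and using the induced map from $M$ to contradict minimality, which again relies essentially on the inverse-limit hypothesis. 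A routine set-theoretic point, that $\mathcal{S}$ may a priori be a proper class, is settled as usual by bounding the cardinality of the modules that can occur in terms of $\Card(M)$ and $\Card(R)$, so that Zorn's lemma is applied to an honest set.
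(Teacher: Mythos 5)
Your overall strategy---upgrading the preenvelope supplied by Lemma~\ref{207} to an envelope via closure under inverse limits---is the right idea, but note that the paper does not reprove this principle: its entire proof is the observation that $\mathcal{F}_{sr}$ is preenveloping by Lemma~\ref{207} followed by an appeal to \cite[Corollary 6.3.5]{EJ00}, which states precisely that a preenveloping class closed under inverse limits is enveloping. Your attempt to reprove that cited result from scratch has a genuine gap. The Zorn step itself is fine, and in fact easier than you suggest: if $G\in\mathcal{F}_{sr}$ satisfies $\im\phi\subseteq G\subseteq G_\lambda$ for some $G_\lambda\in\mathcal{S}$, then the corestriction $M\rightarrow G$ is \emph{automatically} a preenvelope, since any factorization $f=h_\lambda\circ\phi_\lambda$ through $G_\lambda$ restricts to one through $G$; and since every member of $\mathcal{S}$ is a submodule of the fixed module $F$, there is no proper-class issue to settle.

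The real problem is the passage from minimality of $F_0$ in $\mathcal{S}$ to left minimality of $\phi_0$. An envelope requires \emph{every} $g$ with $g\phi_0=\phi_0$ to be an automorphism. Your argument disposes of injective $g$ (via $\im g\in\mathcal{S}$), but such a $g$ can be surjective without being injective, in which case $\im g=F_0$ and minimality yields no contradiction. Your proposed repair---passing to $L=\lim\limits_{\longleftarrow}(F_0\xleftarrow{g}F_0\xleftarrow{g}\cdots)\in\mathcal{F}_{sr}$---does not close this gap: $L$ is not a submodule of $F$, hence not an element of your poset $\mathcal{S}$, and the image of the projection $L\rightarrow F_0$ (or the submodule $\bigcap_n g^n(F_0)$) is a quotient of $L$, which need not lie in $\mathcal{F}_{sr}$ because by Lemma~\ref{102} that class is only closed under \emph{pure} quotients. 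So no element of $\mathcal{S}$ properly contained in $F_0$ is produced, minimality is never contradicted, and left minimality of $\phi_0$ is not established. Ruling out these non-injective endomorphisms is exactly the delicate point that the Enochs--Jenda theorem handles with a transfinite iteration of preenvelopes and a cardinality/stabilization argument, not with a single application of Zorn's lemma inside one fixed preenvelope. Either cite \cite[Corollary 6.3.5]{EJ00} as the paper does, or supply that finer argument.
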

\begin{proof}
By Lemma  \ref{207}, the class $\mathcal{F}_{sr}$ is preenveloping. Then $\mathcal{F}_{sr}$ is enveloping by \cite[Corollary 6.3.5]{EJ00}.
\end{proof}

\begin{theorem}\label{213}
 Let $R$ be a ring. Then the following statements are equivalent:
\begin{enumerate}
 \item $R$ is a strong \Prufer\ ring;
 \item any $R$-module has an epimorphic semi-regular flat envelope;
 \item any $R$-module has an  epimorphic semi-regular flat preenvelope.
\end{enumerate}
\end{theorem}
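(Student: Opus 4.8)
The plan is to prove the cycle $(1)\Rightarrow(2)\Rightarrow(3)\Rightarrow(1)$. The implication $(2)\Rightarrow(3)$ is immediate, since every envelope is in particular a preenvelope, so an epimorphic semi-regular flat envelope is already an epimorphic semi-regular flat preenvelope. The substance lies in $(1)\Rightarrow(2)$ and $(3)\Rightarrow(1)$, both of which I would drive by the closure of $\mathcal{F}_{sr}$ under submodules that characterizes strong \Prufer\ rings in Theorem \ref{203}.

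For $(1)\Rightarrow(2)$ I would first secure \emph{existence} of semi-regular flat envelopes. A strong \Prufer\ ring is semi-regular coherent by Proposition \ref{sp-sc}, so by Theorem \ref{s-r-chase} the class $\mathcal{F}_{sr}$ is closed under direct products; and by Theorem \ref{203} every submodule of a semi-regular flat module is again semi-regular flat. Since any inverse limit $\varprojlim F_i$ embeds into the product $\prod F_i$, these two closure properties together show $\mathcal{F}_{sr}$ is closed under inverse limits. Corollary \ref{208} then yields that $\mathcal{F}_{sr}$ is enveloping, so every $R$-module admits a semi-regular flat envelope.

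The crucial point is to show these envelopes are epimorphic. Given $M$ with semi-regular flat envelope $\phi\colon M\to F$, set $F'=\im\phi\subseteq F$. Because $R$ is strong \Prufer, $F'$ is semi-regular flat by Theorem \ref{203}, so the corestriction $\bar\phi\colon M\to F'$ is a map into $\mathcal{F}_{sr}$ and hence factors through the preenvelope $\phi$, giving $g\colon F\to F'$ with $g\phi=\bar\phi$. Writing $\iota\colon F'\hookrightarrow F$ for the inclusion, so that $\iota\bar\phi=\phi$, we obtain $(\iota g)\phi=\iota\bar\phi=\phi$. The defining property of an envelope forces the endomorphism $\iota g$ to be an automorphism of $F$, whence $\iota$ is surjective and $F'=F$. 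Thus $\phi$ is epimorphic, completing $(1)\Rightarrow(2)$.

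Finally, for $(3)\Rightarrow(1)$ I would verify condition $(2)$ of Theorem \ref{203}: every submodule $N$ of a semi-regular flat module $G$ is semi-regular flat. By hypothesis $N$ has an epimorphic semi-regular flat preenvelope $\phi\colon N\to F$. The inclusion $N\hookrightarrow G$ is a map into $\mathcal{F}_{sr}$, so it factors as $h\phi$ for some $h\colon F\to G$; since $N\hookrightarrow G$ is a monomorphism, $\phi$ must be a monomorphism, and being epimorphic by assumption it is an isomorphism. Hence $N\cong F$ is semi-regular flat, and Theorem \ref{203} gives that $R$ is strong \Prufer. The main obstacle is the epimorphic claim in $(1)\Rightarrow(2)$: it is precisely the submodule-closure of $\mathcal{F}_{sr}$ (equivalent to strong \Prufer) that makes $\im\phi$ semi-regular flat and lets the universal property collapse the inclusion $\iota$ to an isomorphism. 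The same closure property, read as a factoring statement, powers $(3)\Rightarrow(1)$, so the entire equivalence rests on the characterization of strong \Prufer\ rings by submodule-closure of $\mathcal{F}_{sr}$ from Theorem \ref{203}.
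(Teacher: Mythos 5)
Your proposal is correct and follows essentially the same route as the paper: the same existence argument for envelopes (semi-regular coherence, closure under products and submodules, hence inverse limits, then Corollary \ref{208}), the same factor-through-the-image argument for surjectivity of the envelope, and the same monomorphism-plus-epimorphism argument for $(3)\Rightarrow(1)$ via Theorem \ref{203}. Your handling of the epimorphic claim is in fact slightly cleaner than the paper's (you apply the envelope's rigidity directly to the endomorphism $\iota g$ rather than first proving the corestriction is itself an envelope), but the underlying idea is identical.
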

\begin{proof}
$(2)\Rightarrow (3)$: Trivial.

$(3)\Rightarrow (1)$: Let $F$ be a semi-regular flat module, $i:K\rightarrowtail F$  an embedding map and $f:K\twoheadrightarrow F'$  a semi-regular flat preenvelope. Then there is  a homomorphism $g:F'\rightarrow F$ such that $i=gf$. Thus $f$ is a monomorphism. So $K\cong F'$ is a semi-regular flat module. Hence $R$ is a strong \Prufer\ ring by Theorem \ref{203}.

$(1)\Rightarrow (2)$: Suppose $R$ is a strong \Prufer\ ring. Then $R$ is a semi-regular coherent ring by Proposition \ref{sp-sc}. Hence the class $\mathcal{F}_{sr}$  of semi-regular flat modules  is preenveloping by Lemma \ref{207}. So $\mathcal{F}_{sr}$ is closed under submodules by  Theorem \ref{203}. Suppose $\{F_i|i\in I\}$ is a family of semi-regular flat modules.  Since $R$ is a semi-regular coherent ring,  $\prod_{i\in I} F_i$ is semi-regular flat by Proposition \ref{sp-sc}. Since any inverse limit  is a submodule of direct products, semi-regular flat module is closed under inverse limit. So $\mathcal{F}_{sr}$ is enveloping by Corollary \ref{208}.

 Claim that the $\mathcal{F}_{sr}$-envelope of any $R$-module is an epimorphism.  Indeed, let $f: M\rightarrow F$ is an $\mathcal{F}_{sr}$-envelope.
Consider the standard factorization of  $f$ as $f=h\circ g$, where $g:M\twoheadrightarrow \im{f}$ is an epimorphism, $f:\im{f}\rightarrowtail F$ is a monomorphism. We will show $g$ is  an $\mathcal{F}_{sr}$-envelope of $M$, so $f$ is an epimorphism. First, we will show $g$ is an $\mathcal{F}_{sr}$-preenvelope of $M$. Since $\im{f}\in \mathcal{F}_{sr}$ and for any $f': M\rightarrow F'$ with $F'\in \mathcal{F}_{sr}$, there exists a homomorphism $l:F\rightarrow F'$ such that  $l\circ f=f'$, so $l\circ h\circ g=f'$. Next, we will show $g$ is an $\mathcal{F}_{sr}$-envelope of $M$. Consider the following commutative diagram:
$$\xymatrix@R=25pt@C=25pt{
 M \ar@{->>}[r]^{g}\ar@{=}[d] & \im f \ar[d]_{a}\ar@{>->}[r]^{h} & F\ar@{.>}[d]_{b} \\
 M \ar@{->>}[r]^{g} &\im f \ar@{>->}[r]^{h} & F\\
}$$
Suppose $a:\im{f}\rightarrow\im{f}$ is a homomorphism satisfying  $g=a\circ g$. Then $a$ is an epimorphism. Since $f=h\circ g$ is an
$\mathcal{F}_{sr}$-envelope, there is  an isomorphism $b:F\rightarrow F$ such that $b\circ f=b\circ h\circ g=h\circ a\circ g=h\circ g=f$. So $h\circ a=b\circ h$ as $g$ is an epimorphism. Thus
$a$ is a monomorphism, and therefore $a$ is actually an isomorphism. Hence $f$ is an epimorphism.
\end{proof}

\bigskip

\end{document}